\newcommand{\wedgepower}{{\ensuremath{\bigwedge}}}
\newcommand{\IC}{\mathbb{C}}
\newcommand{\IN}{\mathbb{N}}
\newcommand{\HWV}{\textup{HWV}}
\newcommand{\GL}{\textup{GL}}
\newcommand{\diag}{\textup{diag}}
\newcommand{\Specht}[1]{\mathbb{S}_{#1}}
\newtheorem{lemma}{Lemma}
\newtheorem{cor}{Corollary}
\newtheorem{theorem}{Theorem}
\newcommand{\la}{\lambda}
\newcommand{\al}{\alpha}
\newcommand{\be}{\beta}
\newcommand{\ga}{\gamma}
\newcommand{\plp}{\mathbin{\diamond}}
\newcommand{\g}{\textup{\texttt{k}}}
\newcommand{\rg}{\overline{\g}}
\newcommand{\sgn}{\mathop{\mathrm{sgn}}}
\newcommand{\id}{\textup{id}}
\def\P{{{\rm{\textsf{P}} }}}
\def\SP{\textup{{\rm{\textsf{\#P}}}}}
\def\NP{\textup{{{\rm{\textsf{NP}}}}}}
\newcommand{\tikzbox}[3]{
 \fill[very nearly transparent] (#1,#2,#3+1) -- (#1+1,#2,#3+1) -- (#1+1,#2,#3) -- (#1+1,#2+1,#3) -- (#1,#2+1,#3) -- (#1,#2+1,#3+1) -- cycle;

 \draw (#1,#2,#3) -> (#1+1,#2,#3);
 \draw (#1+1,#2,#3) -> (#1+1,#2+1,#3);
 \draw (#1+1,#2+1,#3) -> (#1,#2+1,#3);
 \draw (#1,#2+1,#3) -> (#1,#2,#3);

 \draw (#1,#2,#3+1) -> (#1+1,#2,#3+1);
 \draw (#1+1,#2,#3+1) -> (#1+1,#2+1,#3+1);
 \draw (#1+1,#2+1,#3+1) -> (#1,#2+1,#3+1);
 \draw (#1,#2+1,#3+1) -> (#1,#2,#3+1);

 \draw (#1,#2,#3) -> (#1,#2,#3+1);
 \draw (#1,#2+1,#3+1) -> (#1,#2+1,#3);
 \draw (#1+1,#2,#3) -> (#1+1,#2,#3+1);
 \draw (#1+1,#2+1,#3+1) -> (#1+1,#2+1,#3);
}
\newcommand{\tikzcuboid}[7]{
 \fill[nearly transparent, #7] (#1,#2,#6+1) -- (#4+1,#2,#6+1) -- (#4+1,#2,#3) -- (#4+1,#5+1,#3) -- (#1,#5+1,#3) -- (#1,#5+1,#6+1) -- cycle;

 \draw (#1,#2,#3) -> (#4+1,#2,#3);
 \draw (#4+1,#2,#3) -> (#4+1,#5+1,#3);
 \draw (#4+1,#5+1,#3) -> (#1,#5+1,#3);
 \draw (#1,#5+1,#3) -> (#1,#2,#3);

 \draw (#1,#2,#6+1) -> (#4+1,#2,#6+1);
 \draw (#4+1,#2,#6+1) -> (#4+1,#5+1,#6+1);
 \draw (#4+1,#5+1,#6+1) -> (#1,#5+1,#6+1);
 \draw (#1,#5+1,#6+1) -> (#1,#2,#6+1);

 \draw (#1,#2,#3) -> (#1,#2,#6+1);
 \draw (#1,#5+1,#6+1) -> (#1,#5+1,#3);
 \draw (#4+1,#2,#3) -> (#4+1,#2,#6+1);
 \draw (#4+1,#5+1,#6+1) -> (#4+1,#5+1,#3);
}
\tikzset{coordinateIIID/.style={x={(240:0.8cm)}, y={(-10:1cm)}, z={(0,1cm)}, scale=0.54}}
\newcommand{\grid}{
 \node at (1,7.75,1) {\footnotesize\hspace{1.4cm} 1$^{\textup{st}}$ coord.};
 \node at (1,1,8.1) {\footnotesize 3$^{\textup{rd}}$ coord.};
 \node at (8,1,1) {\footnotesize 2$^{\textup{nd}}$ coord.};
 \draw[-Triangle] (1,1,1) -- (7.75,1,1);
 \draw[-Triangle] (1,1,1) -- (1,7.75,1);
 \draw (1,7,1) -- (7,7,1);
 \draw (7,1,1) -- (7,7,1);
 
 \draw[-Triangle] (1,1,1) -- (1,1,7.75);
 \draw (1,1,7) -- (1,7,7);
 \draw (1,7,7) -- (1,7,1);
 \draw (1,7,1) -- (1,1,1); 

 \draw (1,1,1) -- (7,1,1);
 \draw (7,1,1) -- (7,1,7);
 \draw (7,1,7) -- (1,1,7);
 \draw (1,1,7) -- (1,1,1);

 \foreach \i in {2,...,6}\draw (\i,0.9,1) -- (\i,1.1,1);
 \foreach \i in {2,...,6}\draw (\i,0.9,7) -- (\i,1.1,7);
 \foreach \i in {2,...,6}\draw (0.9,\i,1) -- (1.1,\i,1);
 \foreach \i in {2,...,6}\draw (0.9,\i,7) -- (1.1,\i,7);
 \foreach \i in {2,...,6}\draw (0.9,7,\i) -- (1.1,7,\i);
 \foreach \i in {2,...,6}\draw (0.9,1,\i) -- (1.1,1,\i);
 \foreach \i in {2,...,6}\draw (1,0.9,\i) -- (1,1.1,\i);
 \foreach \i in {2,...,6}\draw (7,0.9,\i) -- (7,1.1,\i);
 \foreach \i in {2,...,6}\draw (\i,6.9,1) -- (\i,7.1,1);
 \foreach \i in {2,...,6}\draw (6.9,\i,1) -- (7.1,\i,1);
}
\title{All Kronecker coefficients are reduced Kronecker coefficients}
\author{Christian Ikenmeyer\thanks{University of Warwick, christian.ikenmeyer$@$warwick.ac.uk, supported by EPSRC grant EP/W014882/1}\ \ and Greta Panova\thanks{University of Southern California, gpanova$@$usc.edu, partially supported by NSF CCF:AF grant 2007652}}
\begin{document}
\raggedbottom

\maketitle

\begin{abstract}
We settle the question of where exactly do the reduced Kronecker coefficients lie on the spectrum between the Littlewood-Richardson and Kronecker coefficients by
showing that every Kronecker coefficient of the symmetric group is equal to a reduced Kronecker coefficient by an explicit construction.
This implies the equivalence of a question by Stanley from 2000 and a question by Kirillov from 2004 about combinatorial interpretations of these two families of coefficients.
Moreover, as a corollary, we deduce that deciding the positivity of reduced Kronecker coefficients is $\NP$-hard, and computing them is $\SP$-hard under parsimonious many-one reductions. 
\end{abstract}

\section{Introduction}

The \emph{Kronecker coefficients} $\g(\la,\mu,\nu)$ of the symmetric group $S_n$ are some of the most classical, yet largely mysterious, quantities in Algebraic Combinatorics and Representation Theory. The Kronecker coefficient is the multiplicity of the irreducible $S_n$ representation $\mathbb{S}_{\nu}$ in the tensor product $\mathbb{S}_\la \otimes \mathbb{S}_\mu$ of two other irreducible $S_n$ representations. Murnaghan defined them in 1938 as an analogue of the Littlewood-Richardson coefficients $c^{\la}_{\mu\nu}$ of the general linear group $\GL_N$, which are the multiplicity of the irreducible Weyl modules $V_\la$ in the tensor products $V_{\mu} \otimes V_{\nu}$. Yet, the analogy has not translated far into their properties. The Littlewood-Richardson coefficients have a beautiful positive combinatorial interpretation and their positivity is ``easy'' to decide, formally it is in $\P$. However, positive combinatorial formulas for the Kronecker coefficients have eluded us so far, see Section~\ref{ss:related},
 and their positivity is hard to decide.

The \emph{reduced Kronecker coefficients} $\rg(\al,\be,\ga)$ are defined as the stable limit of the ordinary Kronecker coefficients
\[\rg(\al,\be,\ga) := \lim_{n \to \infty} \g(\, (n-|\al|,\al), \ (n-|\be|,\be), \ (n-|\ga|,\ga) \,).\]
These coefficients are called \emph{extended Littlewood-Richardson numbers} in \cite{Kir}, since in the special case when $|\al|=|\be|+|\ga|$
we have $\rg(\al,\be,\ga)=c^{\al}_{\be,\ga}$, the Littlewood-Richardson coefficient.
Problem~2.32 in \cite{Kir} asks for a combinatorial interpretation of $\rg(\al,\be,\ga)$.
As such they have been considered as an intermediate, an interpolation, between the Littlewood-Richardson and Kronecker coefficients. They have been an object of independent interest, see~\cite{Mur38,Mur56,Bri93,Val99,Kir,BOR2,BDO,CR,Manivel,SS,IP17,PPr,OZ21}, and considered better behaved than the ordinary Kronecker coefficients. 

This is, however, not the case. As we show, every Kronecker coefficient is equal to an explicit reduced Kronecker coefficient of not much larger partitions. 
In particular we prove the following theorem.
\begin{theorem}
\label{thm:main}
For all partitions $\la$, $\mu$, $\nu$ of equal sizes, we have
\[
\g(\la,\mu,\nu) \ = \ 
\rg\big(\,\nu_1^{\ell(\la)}+\la , \ \nu_1^{\ell(\mu)}+\mu , \ (\nu_1^{\ell(\la)+\ell(\mu)},\nu)\,\big).
\]
\end{theorem}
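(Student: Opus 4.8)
The plan is to sidestep the limit in the definition of $\rg$ altogether and work from the classical expansion of reduced Kronecker coefficients into ordinary Kronecker and Littlewood--Richardson coefficients:
\[
\rg(\al,\be,\ga)\;=\;\sum_{\rho,\sigma,\tau,\phi,\psi,\chi}\g(\rho,\sigma,\tau)\;c^{\al}_{\rho,\phi,\psi}\,c^{\be}_{\sigma,\phi,\chi}\,c^{\ga}_{\tau,\psi,\chi},\qquad\text{where}\quad c^{\epsilon}_{\kappa_1,\kappa_2,\kappa_3}:=\langle s_{\kappa_1}s_{\kappa_2}s_{\kappa_3},\,s_{\epsilon}\rangle .
\]
This is the coefficient-extraction form of the identity $\sum_{\al,\be,\ga}\rg(\al,\be,\ga)\,s_{\al}(x)s_{\be}(y)s_{\ga}(z)=\Omega[XYZ]\,\Omega[XY]\,\Omega[XZ]\,\Omega[YZ]$, with $\Omega[S]=\prod_{s\in S}(1-s)^{-1}$, which follows from the stable-limit definition together with the plethystic identity $\Omega[XYZ]=\sum_{\rho,\sigma,\tau}\g(\rho,\sigma,\tau)\,s_{\rho}(x)s_{\sigma}(y)s_{\tau}(z)$ and the Cauchy identity; in highest-weight-vector language (cf.\ \cite{IP17}) it says that $\rg(\al,\be,\ga)$ is the dimension of the weight-$(\al,\be,\ga)$ highest-weight-vector space of $\Sym^{\bullet}(A\otimes B\otimes C)\otimes\Sym^{\bullet}(A\otimes B)\otimes\Sym^{\bullet}(A\otimes C)\otimes\Sym^{\bullet}(B\otimes C)$, valid once $\dim A\ge\ell(\al)$, $\dim B\ge\ell(\be)$, $\dim C\ge\ell(\ga)$. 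I would take this as input, substitute $\al=\nu_1^{\ell(\la)}+\la$, $\be=\nu_1^{\ell(\mu)}+\mu$, $\ga=(\nu_1^{\ell(\la)+\ell(\mu)},\nu)$, and show that the huge sum collapses to the single term $\g(\la,\mu,\nu)$.

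Abbreviate $p=\ell(\la)$, $q=\ell(\mu)$, $w=\nu_1$, and $m=|\la|=|\mu|=|\nu|$; then $\al$ has exactly $p$ parts, $\be$ exactly $q$ parts, and $\ga$ has all parts at most $w$. The collapse rests on two points. First, a size count: in any nonzero summand each iterated Littlewood--Richardson coefficient forces its three lower indices to be contained in its upper index and to have sizes summing to the size of the upper index, while $\g(\rho,\sigma,\tau)\neq 0$ forces $|\rho|=|\sigma|=|\tau|=:d$. From the shapes of $\al,\be,\ga$ one reads off $\psi\subseteq w^p$ and $\chi\subseteq w^q$, and the three size constraints become $d+|\phi|+|\psi|=m+pw$, $d+|\phi|+|\chi|=m+qw$, and $d+|\psi|+|\chi|=m+(p+q)w$. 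Adding the first two and subtracting the third gives $d+2|\phi|=m$, whereas $|\psi|\le pw$ and $|\chi|\le qw$ plugged into the third give $d\ge m$; hence $\phi=\varnothing$, $d=m$, and then $|\psi|=pw$, $|\chi|=qw$, forcing $\psi=w^p$ and $\chi=w^q$. Second, a rectangle evaluation: since $\rho$ has at most $p$ rows, multiplying $s_{\rho}$ by the rectangular Schur polynomial $s_{w^p}$ in $p$ variables is multiplication by $(x_1\cdots x_p)^{w}$, so there $s_{\rho}s_{w^p}=s_{\rho+w^p}$ and thus $c^{\nu_1^p+\la}_{\rho,\varnothing,w^p}=[\rho=\la]$; symmetrically $c^{\nu_1^q+\mu}_{\sigma,\varnothing,w^q}=[\sigma=\mu]$; and, after conjugating --- $(\nu_1^{p+q},\nu)'=\nu'+(p+q)^{w}$, $(w^p)'=p^{w}$, $(w^q)'=q^{w}$, all with at most $w$ rows --- the same computation in $w$ variables gives $c^{(\nu_1^{p+q},\nu)}_{\tau,w^p,w^q}=[\tau=\nu]$. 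Only the term $(\rho,\sigma,\tau)=(\la,\mu,\nu)$ survives, contributing exactly $\g(\la,\mu,\nu)$.

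I expect the main obstacle to be expository rather than substantive. The one genuinely fiddly step is making the containments in the size count fully rigorous, which means unwinding the auxiliary partition in $c^{\epsilon}_{\kappa_1,\kappa_2,\kappa_3}=\sum_{\eta}c^{\eta}_{\kappa_1,\kappa_2}\,c^{\epsilon}_{\eta,\kappa_3}$ to justify, e.g., $\rho,\phi,\psi\subseteq\al$; one must also check that the dimension hypotheses under which the $\rg$-formula is valid still hold after the substitution (they do, and in the degenerate range $\ell(\la)\ell(\mu)<\ell(\nu)$ both sides of the theorem equal $0$). If the iterated Littlewood--Richardson expansion of $\rg$ is not already available in exactly this form, I would open with a short derivation of it from the generating-function identity above --- or, equivalently, by decomposing the above highest-weight-vector space via the Cauchy identity and the Kronecker decomposition of $\Sym^{\bullet}(A\otimes B\otimes C)$. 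Everything else is routine manipulation of Schur functions.
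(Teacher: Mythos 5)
Your proof is correct, and it coincides with the paper's supplementary discussion at the very end of Section~\ref{ss:sym_fun_proofs} rather than with any of its three main proofs. The six-index identity
\[
\rg(\al,\be,\ga)=\sum_{\rho,\sigma,\tau,\phi,\psi,\chi}\g(\rho,\sigma,\tau)\,c^{\al}_{\rho\phi\psi}\,c^{\be}_{\sigma\phi\chi}\,c^{\ga}_{\tau\psi\chi}
\]
is, after relabelling the auxiliary partitions, exactly the Briand--Orellana--Rosas / Bowman--De~Visscher--Orellana formula quoted there (your $\phi,\psi,\chi$ are BDO's $\gamma,\alpha,\beta$ and the size constraints $l_1,l_2,l=l_1+2l_2$ are encoded by nonvanishing of the LR coefficients), and your collapse — the containments $\psi\subseteq w^{p}=\al\cap\ga$, $\chi\subseteq w^{q}=\be\cap\ga$, the size bookkeeping forcing $\phi=\varnothing$, $|\rho|=|\sigma|=|\tau|=m$, $\psi=w^p$, $\chi=w^q$, and the three rectangle LR evaluations $c^{w^p+\la}_{\rho,w^p}=[\rho=\la]$, $c^{w^q+\mu}_{\sigma,w^q}=[\sigma=\mu]$, $c^{(w^{p+q},\nu)}_{\tau,w^p,w^q}=[\tau=\nu]$ — tracks that discussion's reasoning closely, with a somewhat cleaner symmetric organization of the size constraints than the $l_1,l_2$ parametrisation.

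The paper's three \emph{main} proofs take a genuinely different route: they pad $\la,\mu,\nu$ with large overhanging first rows via two applications of the classical shift identity (Lemma~\ref{lem:shift}, packaged as Lemma~\ref{lem:walls}), and then prove a new and more general stable-range theorem (Theorem~\ref{thm:stable_range}: $\g$ is already stable once $\la_1\ge\ell(\mu)\nu_1$ and $\mu_1\ge\ell(\la)\nu_1$), whose engine is a rigidity lemma for $\{0,1\}$-contingency arrays with extreme marginals (Lemma~\ref{lem:extremecase}). That route is self-contained at the level of Schur--Weyl/Jacobi--Trudi, yields the independently interesting effective stability bound, and never invokes the BOR/BDO identity, whose original proofs (vertex operators in \cite{BOR2}, the partition algebra in \cite{BDO}) are heavier machinery than anything else in the paper. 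Your approach buys brevity and a transparent collapse at the cost of black-boxing that identity. The one thing you would need to firm up is your opening claim that the generating-function identity $\sum\rg(\al,\be,\ga)s_\al(x)s_\be(y)s_\ga(z)=\Omega[XYZ]\,\Omega[XY]\,\Omega[XZ]\,\Omega[YZ]$ ``follows from the stable-limit definition together with \ldots\ the Cauchy identity'': that step is not immediate and should either be cited to \cite{BOR2} or \cite{BDO}, or derived carefully; as stated it understates the nontrivial input the argument rests on.
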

Here $a^b := (\underbrace{a,\ldots,a}_{b \text{ many}})$ and
$(\nu_1^b,\nu) := (\underbrace{\nu_1,\ldots,\nu_1}_{b \text{ many}},\nu_1,\nu_2,\nu_3,\ldots)$.

Theorem~\ref{thm:main} implies that in a very strong sense, on the spectrum between Littlewood-Richardson and Kronecker coefficients, the reduced Kronecker coefficients are at the same point as the ordinary Kronecker coefficients.
In particular, Theorem~\ref{thm:main} implies that Problem~2.32 in \cite{Kir} is equivalent to Problem~10 in \cite{Sta00}:
Finding a combinatorial interpretation for the Kronecker coefficient or for the reduced Kronecker coefficient are the same problem. Formally, Conjecture 9.1 and 9.4 in \cite{Pak22} are the same.

Our result can be interpreted in a positive or in a negative way.
On the one hand, the reduced Kronecker coefficients cannot be easier to understand than the ordinary Kronecker coefficients. On the other hand, understanding the reduced Kronecker coefficients is sufficient to understand all ordinary Kronecker coefficients.
As a corollary, we settle the conjecture from~\cite[\S 4.4]{PPr} on the hardness of deciding the positivity of $\rg(\alpha,\beta,\gamma)$.

\begin{cor}[settles conj.~in \protect{\cite[\S 4.4]{PPr}}]
Given $\al,\be,\ga$ in unary, deciding if $\rg(\al,\be,\ga)>0$ is $\NP$-hard.
\end{cor}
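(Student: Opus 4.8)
The plan is to derive this corollary directly from Theorem~\ref{thm:main} by combining it with a known $\NP$-hardness result for ordinary Kronecker positivity. The starting point is the theorem of Ikenmeyer--Mulmuley--Walter that deciding whether $\g(\la,\mu,\nu)>0$ is $\NP$-hard even when $\la,\mu,\nu$ are given in unary. Given such an instance $(\la,\mu,\nu)$ of equal size $n$, Theorem~\ref{thm:main} produces partitions
\[
\al = \nu_1^{\ell(\la)}+\la,\qquad \be = \nu_1^{\ell(\mu)}+\mu,\qquad \ga = (\nu_1^{\ell(\la)+\ell(\mu)},\nu)
\]
with $\g(\la,\mu,\nu) = \rg(\al,\be,\ga)$, so in particular $\g(\la,\mu,\nu)>0$ if and only if $\rg(\al,\be,\ga)>0$.

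The second step is to check that this is a valid polynomial-time many-one reduction that preserves the unary encoding. Each part of $\al,\be,\ga$ is bounded by $\nu_1 + \la_1 \le 2n$, and the number of parts is at most $\ell(\la)+\ell(\mu)+\ell(\nu) \le 3n$, so $\al,\be,\ga$ have total bit-size polynomial in $n$ even when written in unary; moreover they are computed from $(\la,\mu,\nu)$ by a trivial explicit formula (prepending columns of height $\nu_1$), which is clearly polynomial-time. Hence the map $(\la,\mu,\nu)\mapsto(\al,\be,\ga)$ is a polynomial-time many-one reduction from unary Kronecker positivity to unary reduced Kronecker positivity, and $\NP$-hardness transfers.

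I do not expect any genuine obstacle here: the entire content of the corollary is Theorem~\ref{thm:main} together with a citation to the pre-existing $\NP$-hardness of Kronecker positivity and a one-line verification that the reduction is size-polynomial in the unary encoding. The only point requiring a little care is making sure the cited hardness result is stated for the \emph{unary} (equivalently, ``small numbers'') regime, since Theorem~\ref{thm:main} only blows up the partitions by a polynomial factor and would not help if the source hardness held only for binary-encoded inputs; but this unary hardness is exactly what is available in the literature, so the argument goes through.
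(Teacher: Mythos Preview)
Your argument is correct and matches the paper's proof exactly: the paper's entire proof is the single sentence that the corollary follows from Theorem~\ref{thm:main} together with the $\NP$-hardness of deciding $\g(\la,\mu,\nu)>0$ from \cite{IMW17}. Your additional verification that the reduction is polynomial in the unary size is a welcome elaboration (with only a harmless slip in the phrase ``prepending columns of height $\nu_1$''---for $\al$ and $\be$ one prepends $\nu_1$ columns of height $\ell(\la)$ and $\ell(\mu)$ respectively), but the approach is identical.
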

\begin{proof}
This follows directly from Theorem~\ref{thm:main} and the fact that deciding $\g(\la,\mu,\nu)>0$ is $\NP$-hard~\cite{IMW17}.
\end{proof}
Moreover, by the same immediate argument it is now clear that computing the reduced Kronecker coefficient is strongly \#P-hard under parsimonious many-one reductions (whereas, the argument in \cite{PPr} gives only the \#P-hardness under Turing reductions).

\subsection{Background and definitions}
We refer to~\cite{JK,EC2,Sag} for basic definitions and properties from Algebraic Combinatorics and Representation Theory, and include the main definitions here for completeness.

We write $[a,b]:=\{a,a+1,\ldots,b\}$, and $[n]:=[1,n]$.
A \emph{composition} of $n$ is a sequence of nonnegative integers that sum up to $n$.
A \emph{partition} $\la=(\la_1,\la_2,\ldots)$ of $n$, denoted $\la \vdash n$, is a weakly decreasing composition.
Its size is $|\la| := \sum_i \la_i$.
Denote by $\ell(\la)=\max\{i \mid \la_i > 0\}$ the \emph{length} of~$\la$.
We interpret $\la$ as a vector of arbitrary length $\geq \ell(\la)$ by appending zeros.
We denote by $(n)$ the partition of $n$ of length~1.
To every partition we associate its \emph{Young diagram}, which is a list of left-justified rows of boxes, $\la_i$ many boxes in row $i$.
We write $\la'$ do denote the \emph{transpose} partition, i.e., the partition that arises from reflecting the Young diagram at the main diagonal. Formally, $\la'_j := \max\{i \mid \la_i\geq j\}$.
We add partitions row-wise: $(\la+\mu)_i=\la_i+\mu_i$.
We define $\la\plp\mu := (\la'+\mu')'$.
Note that $\plp$ is commutative and associative, and that
if $\la_{\ell(\la)}\geq \mu_1$, then $\la\plp\mu=(\la_1,\ldots,\mu_1,\ldots)$ is just the concatenation of rows.
The Specht modules $\Specht\la$ for $\la \vdash n$ are the irreducible representation of the symmetric group $S_{n}$, see~\cite{JK,EC2,Sag}.

The \emph{Kronecker coefficient} $\g(\la,\mu,\nu)$ is the structure constant\footnote{We remark that in the combinatorics literature these coefficients have usually been denoted by $g$, e.g. $g(\la,\mu,\nu)$, but here we use $\g$ to avoid overlap with the notation used for the Representation Theory of $GL_N$. } defined via
\[
\chi^\mu \cdot \chi^\nu = \sum_\la \g(\la,\mu,\nu) \chi^\la,
\]
or equivalently via Specht modules as
\[
\Specht\nu \otimes \Specht\mu = \sum_\la \Specht\la^{\oplus \g(\la,\mu,\nu)}.
\]
From this description, it is immediately clear that $\g(\la,\mu,\nu)$ is a nonnegative integer.
The problem of finding a combinatorial interpretation of $\g(\la,\mu,\nu)$ is wide open \cite{Sta00,IP22,Pan23}. 

The Kronecker coefficients were defined by Murnaghan~\cite{Mur38} in 1938 as the analogues of the \emph{Littlewood-Richardson coefficients} $c^{\la}_{\mu\nu}$. These are the structure constants in the ring of irreducible $\GL_N$ representations, the Weyl modules $V_{\la}$, and are formally given by
$$V_\mu \otimes V_\nu = \bigoplus_\la V_{\la}^{\oplus c^{\la}_{\mu\nu}}.$$

Some simple properties, see~\cite{JK,Sag} include the transposition invariance $\g(\la,\mu,\nu) = \g(\la',\mu',\nu)$, since $\Specht{1^n}\otimes\Specht\la=\Specht{\la'}$~\cite{JK}. From their definition, and the fact that $\chi^\la(\pi) \in \mathbb{Z}$, see~\cite{JK,Sag}, we have
\[
\g(\la,\mu,\nu)=\tfrac{1}{n!}\sum_{\pi\in S_n}\chi^\la(\pi)\chi^\mu(\pi)\chi^\nu(\pi),
\]
and thus we have the $S_3$ invariance $\g(\la,\mu,\nu)=\g(\la,\nu,\mu)=\g(\mu,\nu,\la)=\cdots$.
Note that the Kronecker coefficient is not invariant under transposing an odd number of partitions, and
we define 
$$\g'(\la,\mu,\nu) := \g(\la',\mu',\nu') = \g(\la',\mu,\nu) = \g(\la,\mu',\nu) = \g(\la,\mu,\nu').$$
It is known that $\g(\la,\mu,\nu)=0$ if $\ell(\la)>\ell(\mu)\cdot\ell(\nu)$ \cite{Dvi93}, which also follows by combining
$\g(\la,\mu,\nu)=\g(\la,\mu',\nu')$ with Lemma~\ref{lem:upperbound}.
We define the \emph{stable range} as the set of triples $(\la,\mu,\nu)$ that satisfy
\[
\forall i\geq 0: \ \g(\la,\mu,\nu) \ = \ \g\big(\, \la+(i), \ \mu+(i) , \ \nu+(i)\,\big).
\]
There are several proofs for the fact that for arbitrary $(\al,\be,\ga)$ with $|\al|=|\be|=|\ga|$, the triple
$(\al+(i), \ \be+(i) ,\ga+(i))$ is in the stable range for $i$ large enough (and hence for all $i$ from then on), and upper bounds on the necessary $i$ are known, see e.g.\ \cite{Bri93}, \cite{Val99}, \cite{BOR2}, \cite[\S7.4]{Ike12}, \cite{PP14}.
The reduced Kronecker coefficient is defined as this limit value: 
$$\rg(\al,\be,\ga) := \lim_{n \to \infty} \g(\, (n-|\al|,\al), \ (n-|\be|,\be), \ (n-|\ga|,\ga) \,)$$
 for arbitrary partitions $\al$, $\be$, $\ga$ (in particular, we do \emph{not} require $|\alpha|=|\beta|=|\gamma|$).

\subsection{Related work}
\label{ss:related}
The Littlewood-Richardson coefficients can be computed by the Littlewood-Richardson rule, stated in 1934 and proven formally about 40 years later. It says that $c^{\la}_{\mu\nu}$ is equal to the number of LR tableaux of shape $\la/\mu$ and content $\nu$, see Section~\ref{ss:sym_fun_background} and~\cite{EC2,Sag}.
The apparent analogy in definitions motivates the community to search for such interpretations for the Kronecker coefficients.
Interest in efficient ways to compute $\g(\la,\mu,\nu)$ and $\rg(\al,\be\ga)$ dates back at least to Murnaghan \cite{Mur38}.
Specific interest in nonnegative combinatorial interpretations of $\g(\la,\mu,\nu)$ can be found in \cite{Las79,GR85}, and was formulated clearly again by Stanley as Problem 10 in  his list ``Open Problems in Algebraic Combinatorics''~\cite{Sta00}:

\medskip

``Find a combinatorial interpretation of the Kronecker product coefficients $\g(\la,\mu,\nu)$, thereby combinatorially reproving that they are nonnegative.''

\medskip
See also~\cite{Pan23} for a detailed discussion on this topic.

Despite its natural and fundamental nature and the variety of efforts, this question has seen relatively little progress.
In 1989 Remmel found a combinatorial rule for $\g(\la,\mu,\nu)$ when two of the partitions are hooks \cite{remm:89}. In 1994 Remmel and Whitehead~\cite{RW94} found $\g(\la,\mu,\nu)$ for  $\ell(\la), \ell(\mu)\leq 2$, which was subsequently studied also in~\cite{BMS}. In 2006 Ballantine and Orellana~\cite{BO06} established a rule for $\g(\la,\mu,\nu)$ when  $\mu=(n-k,k)$ and $\la_1 \geq 2k-1$. The most general rule for $\nu=(n-k,1^k)$, a hook, and any other two partitions, was estabslished by Blasiak in 2012~\cite{Bla}, and later simplified in~\cite{Liu17,BL}. Other special cases include multiplicity-free Kronecker products by Bessenrodt-Bowman~\cite{BB17}, triples of partitions which are marginals of pyramids by Ikenmeyer-Mulmuley-Walter~\cite{IMW17}, $\g(m^k,m^k,(mk-n,n))$ as counting labeled trees by Pak-Panova~\cite[slide~9]{Pporto}, near-rectangular partitions by Tewari in~\cite{T15}, etc. As shown in~\cite{IMW17}, computing the Kronecker coefficients is $\SP$-hard, and deciding positivity is $\NP$-hard.

It was shown by Murnaghan~\cite{Mur56} that the reduced Kronecker coefficients generalize the Littlewood-Richardson coefficients as
$$
\rg(\al,\be,\ga) \, = \, c^\al_{\be\ga} \quad \text{for} \quad |\al| =  |\be|  + |\ga|,
$$
which motivates Kirillov's naming of $\rg$ as ``extended Littlewood-Richardson coefficients''. This relationship and other properties have motivated an independent interest in the reduced Kronecker coefficients as intermediates between Littlewood-Richardson and ordinary Kronecker coefficients.  Some special cases of combinatorial interpretations can be derived from the existing ones for the ordinary Kronecker coefficients. In~\cite{CR} a combinatorial interpretation was given when $\mu,\nu$ are rectangles and $\la$ is one row.  A combinatorial interpretation of $\rg(\al,\be,\ga)$ in the subcase where $\ell(\al)=1$ was obtained in~\cite{BO05, BO06}.  Methods to compute them have been discussed in \cite{Mur38,Mur56} and have been developed in a series of papers, see~\cite{BOR2,BDO,OZ20,OZ21}.
As observed in~\cite{BDO} the reduced Kronecker coefficients are also the structure constants for the ring of so called character polynomials~\cite{Mac}. The reduced Kronecker coefficients are a special case of a more general stability phenomenon that if $\g(k\al,k\be,k\ga)=1$  for all $k$ then $\g(\la+N\al,\mu+N\be,\nu+N\ga)$ stabilizes as $N\to \infty$, see~\cite{Ste,SS,Val20}.

The  Kronecker coefficients can be expressed as a small alternating sum of reduced Kronecker coefficients, and reduced Kronecker coefficients are certain sums of ordinary Kronecker coefficients for smaller partitions, see~\cite{BOR2}.
These relationships showed that reduced Kronecker coefficients are also \SP-hard to compute, see~\cite{PPr}. However, these relations did not imply that deciding  positivity of reduced Kronecker coefficients is \NP-hard.

It is important to note that deciding if $c^{\la}_{\mu\nu}>0$ is in $\P$, since they count integer points in a polytope that has an integral vertex whenever it is nonempty. This was proved by Knutson and Tao in~\cite{KT} as part of their proof of the Saturation theorem for Littlewood-Richardson coefficients, namely that
$c^{N\la}_{N\mu, N\nu}>0 \Longleftrightarrow c^{\la}_{\mu\nu}>0$.
It is known that the Kronecker coefficients (and hence also the reduced Kronecker coefficients) satisfy the so-called \emph{semigroup property} \cite[Thm~2.7]{Chr06}, which implies that if $\g(\la,\mu,\nu)>0$, then $\forall N>0: \g(N\la,N\mu,N\nu)>0$,
and
if $\rg(\al,\be,\ga)>0$, then $\forall N>0: \g(N\al,N\be,N\ga)>0$.
The Kronecker coefficients do not satisfy the saturation property, because $\g(2^2,2^2,2^2)=1$, but $\g(1^2,1^2,1^2)=0$.
Until recently it was believed that the reduced Kronecker coefficients have the saturation property: It was conjectured in~\cite{Kir,Kly} that 
if $\rg(N\al,N\be, N\ga) > 0$ for some $N>0$, then
 $\rg(\al,\be,\ga) > 0 .$ This was disproved in~\cite{PPr} in 2020 and moved the reduced Kroneckers away from the Littlewood-Richardson on that spectrum.

\section{Setting up the proof of Theorem~\protect{\ref{thm:main}}}
\label{sec:main}
We discovered Theorem~\ref{thm:main} using the natural interpretation of $\g(\la,\mu,\nu)$ via the general linear group, see \S\ref{sec:gln}, and the relationship with 3-dimensional binary contingency arrays. 
We set the proof up in this section, reducing to a more general Theorem~\ref{thm:stable_range}, which has a short proof via $GL_N$ in  \S\ref{sec:gln}.
We also give two short, self-contained proofs using basic symmetric function techniques in \S\ref{sec:symfct}.
Once the statement of Theorem~\ref{thm:main} is known, it can also be readily deduced from a formula from 2011 by Briand, Orellana, and Rosas \cite{BOR2},
see the discussion at the end of Section~\ref{ss:sym_fun_proofs}.

\medskip

We prove a slightly stronger statement than Theorem~\ref{thm:main}: For $l\geq\ell(\la)$, $m\geq\ell(\mu)$, $c\geq\nu_1$ we have
\[
\g(\la,\mu,\nu) = 
\rg\big(\,c^{l}+\la , \ c^{m}+\mu , \ c^{l+m}\plp \nu\,\big).
\]
We start with a classical identity that can be proved in several ways, see e.g.\
\cite[Thm~2.4']{Dvi93},
\cite[Pf of Lem~2.1]{BOR09},
\cite[Thm~3.1]{Val09}, \cite[Cor.~4.4.14]{Ike12}.

\begin{lemma}
\label{lem:shift}
Let $\la,\mu,\nu$ be partitions with $\ell(\la)\leq l$, $\ell(\mu)\leq m$. Then
\[
\g(\la,\mu,\nu) \ = \ \g( \, m^l+\la, \ l^m+\mu, \ 1^{lm}+\nu \, ).
\]
\end{lemma}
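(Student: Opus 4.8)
The plan is to deduce Lemma~\ref{lem:shift} from the classical expansion of a Schur polynomial evaluated at a product of two alphabets, which is the identity in which Kronecker coefficients appear most transparently. Namely, for every partition $\nu$ and all $l,m\geq 1$ one has the polynomial identity
\[
s_\nu\bigl((x_iy_j)_{i\in[l],\,j\in[m]}\bigr)\;=\;\sum_{\ell(\la)\leq l,\ \ell(\mu)\leq m}\g(\la,\mu,\nu)\,s_\la(x_1,\ldots,x_l)\,s_\mu(y_1,\ldots,y_m),
\]
where $s$ denotes the Schur polynomial. This follows at once from the character formula $\g(\la,\mu,\nu)=\tfrac1{n!}\sum_{\pi\in S_n}\chi^\la(\pi)\chi^\mu(\pi)\chi^\nu(\pi)$ together with the multiplicativity of power sums under a product of alphabets, and it is exactly the statement that the irreducible $\GL_{lm}$-module $V_\nu$, restricted along the embedding $\GL_l\times\GL_m\hookrightarrow\GL_{lm}$ coming from $\IC^l\otimes\IC^m=\IC^{lm}$, decomposes as $\bigoplus_{\la,\mu}(V_\la\otimes V_\mu)^{\oplus\g(\la,\mu,\nu)}$. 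Taking $\ell(\nu)>lm$ makes the left side vanish, which already recovers $\g(\la,\mu,\nu)=0$ whenever $\ell(\la)\leq l$ and $\ell(\mu)\leq m$; hence below we may freely assume $\ell(\nu)\leq lm$, all remaining instances of the lemma being the trivial identity $0=0$.

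First I would substitute $1^{lm}+\nu$ for $\nu$ in the displayed identity. Using $\prod_{i\in[l],\,j\in[m]}x_iy_j=(x_1\cdots x_l)^m(y_1\cdots y_m)^l$ together with the elementary fact that multiplying a Schur polynomial in $N$ variables by the product of all $N$ of them adds $(1^N)$ to the indexing partition, the left side becomes
\[
s_{1^{lm}+\nu}\bigl((x_iy_j)\bigr)\;=\;(x_1\cdots x_l)^m\,(y_1\cdots y_m)^l\;s_\nu\bigl((x_iy_j)\bigr).
\]
I would then expand $s_\nu((x_iy_j))$ by the identity and absorb the two rectangular prefactors: since $\ell(\la)\leq l$, factoring $x_i^m$ out of the $i$-th row of the bialternant gives $(x_1\cdots x_l)^m\,s_\la(x_1,\ldots,x_l)=s_{m^l+\la}(x_1,\ldots,x_l)$, and symmetrically $(y_1\cdots y_m)^l\,s_\mu(y_1,\ldots,y_m)=s_{l^m+\mu}(y_1,\ldots,y_m)$. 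This yields
\[
s_{1^{lm}+\nu}\bigl((x_iy_j)\bigr)\;=\;\sum_{\ell(\la)\leq l,\ \ell(\mu)\leq m}\g(\la,\mu,\nu)\,s_{m^l+\la}(x)\,s_{l^m+\mu}(y).
\]

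On the other hand, expanding $s_{1^{lm}+\nu}((x_iy_j))$ directly by the same identity gives $\sum_{\ell(\kappa)\leq l,\ \ell(\rho)\leq m}\g(\kappa,\rho,1^{lm}+\nu)\,s_\kappa(x)\,s_\rho(y)$. Comparing the two expansions, using linear independence of Schur polynomials and the fact that $\la\mapsto m^l+\la$ is a bijection from $\{\la:\ell(\la)\leq l\}$ onto $\{\kappa:\ell(\kappa)\leq l,\ \kappa_l\geq m\}$ (and similarly $\mu\mapsto l^m+\mu$), one reads off $\g(\la,\mu,\nu)=\g(m^l+\la,\,l^m+\mu,\,1^{lm}+\nu)$ for all $\la,\mu$, which is the assertion of the lemma. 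As a byproduct one also gets $\g(\kappa,\rho,1^{lm}+\nu)=0$ unless $\kappa_l\geq m$ and $\rho_m\geq l$, though this is not needed.

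The step requiring the most care — and it is honestly rather minor, which is why this identity is classical — is the final bookkeeping: the shift map $\la\mapsto m^l+\la$ is not surjective onto all partitions of length $\leq l$, so one must note explicitly that it is injective with the image described before concluding from the coefficient comparison. Everything else is the standard Schur-polynomial toolkit (the ratio-of-determinants formula, the all-ones column shift, linear independence), and no input beyond the defining character formula for $\g$ is used, so the proof is self-contained. The argument can equally be run entirely on $S_n$-characters by applying the Frobenius characteristic throughout, or on $\GL$-modules by tracking the restriction of $V_{1^{lm}+\nu}\cong\det\otimes V_\nu$ to $\GL_l\times\GL_m$ and using that $\wedge^{lm}(\IC^l\otimes\IC^m)$ is the one-dimensional character $(\det)^{m}\otimes(\det)^{l}$ there; these are the viewpoints taken in \cite{Dvi93,BOR09,Val09,Ike12}.
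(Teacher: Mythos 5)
Your proposal is correct and follows essentially the same route as the paper's symmetric-function proof of Lemma~\ref{lem:shift} in Section~\ref{ss:sym_fun_proofs}: expand $s_{1^{lm}+\nu}[x\cdot y]$ via equation~\eqref{eq:schur_kron1} in $l+m$ variables, peel off the rectangular factors $(x_1\cdots x_l)^m(y_1\cdots y_m)^l$, absorb them into $s_{m^l+\la}(x)$ and $s_{l^m+\mu}(y)$, and compare coefficients. The only cosmetic difference is that you justify the column/rectangle absorption identities via the bialternant (Weyl determinantal) formula, whereas the paper argues them through the SSYT generating function, and you also flag the injectivity of $\la\mapsto m^l+\la$ explicitly, a point the paper handles implicitly via the length constraints $\ell(\theta)=l$, $\ell(\tau)=m$.
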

The situation is depicted in Figure~\ref{fig:lem:shift}.
\begin{figure}[ht]
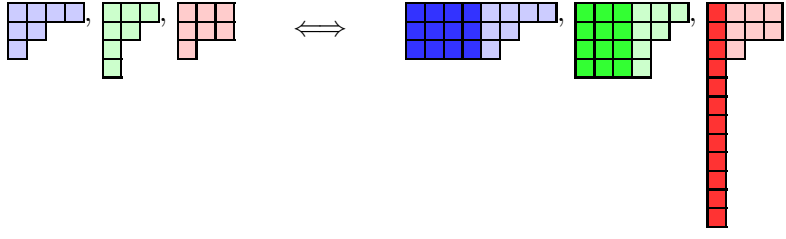

\ytableausetup{boxsize=1.4ex}

$$\ydiagram[*(blue!20)]{4,2,1}, \, \ydiagram[*(green!20)]{3,2,1,1},\, \ydiagram[*(red!20)]{3,3,1} \qquad \raisebox{-0.2cm}{$\Longleftrightarrow$} \qquad
\ydiagram[*(blue!80)]{4,4,4}*[*(blue!20)]{4+4,4+2,4+1} , \, \ydiagram[*(green!80)]{3,3,3,3}*[*(green!20)]{3+3,3+2,3+1,3+1}, \,
\ydiagram[*(red!80)]{1,1,1,1,1,1,1,1,1,1,1,1}*[*(red!20)]{1+3,1+3,1+1}$$
\caption{An example of the situation in Lemma~\ref{lem:shift} with $\la=(4,2,1)$, $\mu=(3,2,1,1)$, $\nu=(3,3,1)$, $l=3$, and $m=4$.}
\label{fig:lem:shift}
\end{figure}
Note that if $\ell(\nu)>lm$, then 
$\ell(\nu)>lm\geq\ell(\la)\cdot\ell(\mu)$, and hence $\g(\la,\mu,\nu)=0$.
Moreover, $\ell(1^{lm}+\nu)=\ell(\nu)>lm\geq\ell(\la)\cdot\ell(\mu) = \ell(m^l+\la)\cdot\ell(l^m+\mu)$, and hence $\g( \, m^l+\la, \ l^m+\mu, \ 1^{lm}+\nu \, )=0$.
So we can assume that $\ell(\nu)\leq lm$.
We give two proofs in this case, one in \S\ref{sec:gln} and one in~\S\ref{sec:symfct}.

The following Lemma~\ref{lem:walls} is proved by applying
Lemma~\ref{lem:shift} twice, in different directions. An illustration of the situation is given in Figure~\ref{fig:lem:walls}.

\begin{lemma}
\label{lem:walls}
Let $\la$, $\mu$, $\nu$ be partitions of the same size, and
let $l \geq \ell(\la)$, $m \geq \ell(\mu)$ and $c \geq \nu_1$.
Let $d=(m+1)c$, $e=(l+1)c$.
Then
\[
\g(\la,\mu,\nu) \ = \ \g\big(\, (d)\plp(c^l + \la), \ (e)\plp(c^m + \mu), \ c^{l+m+1} \plp \nu \,\big).
\]
\end{lemma}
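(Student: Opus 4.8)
The plan is to derive Lemma~\ref{lem:walls} from two applications of Lemma~\ref{lem:shift}, each made to a differently rearranged copy of the triple, and then to convert the output back using that $\g$ is $S_3$-symmetric and unchanged when two of its three arguments are transposed, together with the elementary conjugation identities $(\sigma\plp\tau)'=\sigma'+\tau'$ and $(k^t+\kappa)'=t^k\plp\kappa'$ (valid when $\ell(\kappa)\le t$; the special case $k=1$ reads $(1^t+\kappa)'=(t)\plp\kappa'$).

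First I would dispose of the degenerate ranges. If $\la_1>cm$, then $\la_1>\ell(\mu)\,\nu_1$ because $m\ge\ell(\mu)$ and $c\ge\nu_1$, so $\g(\la,\mu,\nu)=\g(\la',\mu,\nu')=0$ by Dvir's bound; on the right-hand side the first part of $(d)\plp(c^l+\la)$ equals $\max\{(m+1)c,\,c+\la_1\}=c+\la_1$, which exceeds $\ell\big((e)\plp(c^m+\mu)\big)\cdot\big(c^{l+m+1}\plp\nu\big)_1=(m+1)c$, so transposing the first and third entries and invoking Dvir's bound again makes the right-hand side vanish as well. The range $\mu_1>lc$ is symmetric. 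Hence from now on I may assume $\la_1\le cm$ and $\mu_1\le lc$.

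For the first shift I would write $\g(\la,\mu,\nu)=\g(\nu,\mu,\la)=\g(\nu',\mu,\la')$ and apply Lemma~\ref{lem:shift} with its two parameters equal to $(c,m)$ — legal since $\ell(\nu')=\nu_1\le c$ and $\ell(\mu)\le m$ — then transpose the first and third entries back; using $(m^c+\nu')'=c^m\plp\nu$ and $(1^{cm}+\la')'=(cm)\plp\la$ (the latter needing $\la_1\le cm$) this gives
\[
\g(\la,\mu,\nu)=\g\big((cm)\plp\la,\ c^m+\mu,\ c^m\plp\nu\big).
\]
For the second shift I would rearrange this triple as $\big((cm)\plp\la,\ m^c+\nu',\ m^c\plp\mu'\big)$ (an $S_3$-permutation followed by transposing the last two entries, via $(c^m\plp\nu)'=m^c+\nu'$ and $(c^m+\mu)'=m^c\plp\mu'$) and apply Lemma~\ref{lem:shift} again, now with parameters $(l+1,c)$ — legal since $\ell\big((cm)\plp\la\big)=1+\ell(\la)\le l+1$ and $\ell\big(m^c+\nu'\big)=c$. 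Transposing the last two entries back, using $(l+1)^c+m^c=(l+m+1)^c$, and simplifying, the three output partitions become
\[
c^{l+1}+\big((cm)\plp\la\big),\qquad \big(1^{(l+1)c}+(m^c\plp\mu')\big)',\qquad \big((l+m+1)^c+\nu'\big)',
\]
and I claim these equal $(d)\plp(c^l+\la)$, $(e)\plp(c^m+\mu)$ and $c^{l+m+1}\plp\nu$ respectively: the third follows from $(k^t+\kappa)'=t^k\plp\kappa'$ with $\ell(\nu')=\nu_1\le c$; the second from its $k=1$ case together with $\ell(m^c\plp\mu')=c+\mu_1\le(l+1)c$ and $(m^c\plp\mu')'=c^m+\mu$; and the first is a one-line row-by-row addition (using $\la_1\le cm$). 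Since $\g$ is $S_3$-symmetric the surviving permutation is irrelevant, so this is exactly the right-hand side of Lemma~\ref{lem:walls}.

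The only genuinely delicate point is the case split: the identity $(1^t+\kappa)'=(t)\plp\kappa'$ fails once $\ell(\kappa)>t$, so the chain of shifts proves the equality only in the generic range $\la_1\le cm$, $\mu_1\le lc$, and one must separately — but easily, via Dvir's bound applied to transposed triples, as in the second paragraph — check that both sides vanish outside it. Everything else is routine bookkeeping of how the long rows $(cm)$, the column strips $1^t$ and the rectangles $c^t$ behave under $\plp$, $+$ and conjugation, carried out in Lemma~\ref{lem:shift}'s notation.
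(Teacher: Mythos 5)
Your proof is correct, and your main argument is exactly the paper's: two applications of Lemma~\ref{lem:shift}, with the same choice of parameters $(c,m)$ and $(l+1,c)$ on the same two rearranged triples, interleaved with transposing two of the three slots and using the $S_3$-invariance of $\g$. The only divergence is the case split, which is unnecessary and stems from a small misconception about the $\plp$ operation. Since the paper defines $\sigma\plp\tau := (\sigma'+\tau')'$, the conjugation identities you invoke---$(1^t+\kappa)' = (t)\plp\kappa'$, $(k^t+\kappa)' = t^k\plp\kappa'$, $(\sigma\plp\tau)'=\sigma'+\tau'$---are all true by definition, for every pair of partitions; the hypothesis $\ell(\kappa)\leq t$ is needed only to identify $t^k\plp\kappa'$ with the literal row concatenation, not for the identity itself. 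In particular, the first-slot equality you justify as ``a one-line row-by-row addition (using $\la_1\le cm$)'' in fact holds unconditionally under $\ell(\la)\leq l$:
\[
(d)\plp(c^l+\la) \;=\; \big(1^d+(c^l+\la)'\big)' \;=\; \big((l+1)^c\plp(1^{mc}+\la')\big)' \;=\; c^{l+1}+\big((mc)\plp\la\big),
\]
where the middle step uses only $(1^{mc}+\la')_1 = \ell(\la)+1 \leq l+1$, and likewise for the second and third slots. So your degenerate-range analysis via Dvir's bound---although itself correct, and a nice sanity check---is superfluous; the chain of shifts already proves the identity for all partitions with $\ell(\la)\leq l$, $\ell(\mu)\leq m$, $\nu_1\leq c$, exactly as in the paper.
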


\begin{figure}[ht]
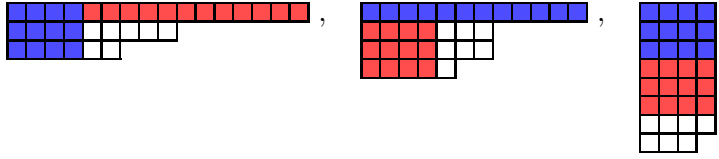

$$\ydiagram[*(blue!70)]{4,4,4}*[*(red!70)]{4+12}*{4+0,4+5,4+2} \,\, , \quad  \ydiagram[*(blue!70)]{12}*[*(red!70)]{0,4,4,4}*{0,4+3,4+3,4+1} \,\, , \quad
\ydiagram[*(blue!70)]{4,4,4}*[*(red!70)]{0,0,0,4,4,4}*{0,0,0,0,0,0,4,3}$$
\caption{An example of the proof of Lemma~\ref{lem:walls} with $\la=(5,2)$, $\mu=(3,3,1)$ and $\nu=(4,3)$, with $l=2$, $m=3$ and $c=4$. The red boxes are the addition from the first application of Lemma~\ref{lem:shift} and the blue boxes are the second application.}
\label{fig:lem:walls}
\end{figure}

\begin{proof}
We apply Lemma~\ref{lem:shift} twice as follows.

\mbox{~}

\vspace{-1.1cm}
\begin{eqnarray*}
\g(\la,\mu,\nu)
&=&
\g( \, \la', \, \mu, \, \nu' \, )
\\
&\stackrel{\textup{Lem.~\ref{lem:shift}}}{=}&
\g( \,  1^{mc}+ \la'
, \  c^m + \mu, \ m^c+\nu' \, )\\
&=&
\g( \,  (mc) \plp \la
, \  m^c \plp\mu', \ m^c+\nu' \, )
\\
&\stackrel{\textup{Lem.~\ref{lem:shift}}}{=}&
\g( \, c^{l+1}  + ((mc) \plp \la)
, \ 1^e + (m^c \plp\mu'), \ (l+m+1)^c+\nu' \, )
\\
&=&
\g( \, (d) \plp (c^l + \la)
, \ (e) \plp (c^m+\mu), \ c^{l+m+1}\plp\nu \, ).
\end{eqnarray*}

\vspace{-0.6cm}
\end{proof}

\begin{theorem}\label{thm:stable_range}
Let $\la$, $\mu$, $\nu$ be partitions of the same size, such that $\la_1 \geq  \ell(\mu) \cdot \nu_1$ and $\mu_1 \geq \ell(\la) \cdot \nu_1$. Then for every $h \geq 0$ we have
\[
\g(\la,\mu,\nu) = \g(\, \la+h, \ \mu+h, \ \nu+h\,).
\]
\end{theorem}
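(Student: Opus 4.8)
The plan is to show that once $\la_1 \geq \ell(\mu)\cdot\nu_1$ and $\mu_1 \geq \ell(\la)\cdot\nu_1$, adding a single box to the first row of each of the three partitions does not change the Kronecker coefficient; iterating this $h$ times then gives the statement. So it suffices to prove $\g(\la,\mu,\nu) = \g(\la+(1),\mu+(1),\nu+(1))$ under the stated hypotheses, i.e.\ that $(\la,\mu,\nu)$ already lies in the stable range. The natural tool is the fact (cf.\ the discussion of the stable range and Lemma~\ref{lem:walls} in the excerpt) that one can certify membership in the stable range by exhibiting, via the shift identities, a single triple that simultaneously ``sees'' both $(\la,\mu,\nu)$ and $(\la+(1),\mu+(1),\nu+(1))$ under the same Lemma~\ref{lem:shift} transformation.

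Here is the key step. Set $l = \ell(\la)$, $m = \ell(\mu)$, $c = \nu_1$. By Lemma~\ref{lem:shift} applied to the transposed triple (exactly as in the proof of Lemma~\ref{lem:walls}), $\g(\la,\mu,\nu)$ equals a Kronecker coefficient of a triple in which the first partition has been enlarged on the left by a block of columns of height $mc$ and the second by a block $c^m$, etc. The hypotheses $\la_1 \geq m\nu_1 = mc$ and $\mu_1 \geq l\nu_1 = lc$ are precisely what is needed so that, after the $\plp$ (column-concatenation) operations, the ``new'' columns added to $\la'$ and the old part of $\la'$ interleave in a controlled, stable way — concretely, the first row of the enlarged first partition is $\la_1$ with a block of width $mc \leq \la_1$ sitting to its left, so incrementing $\la_1$ by one is absorbed harmlessly. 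I would make this precise by checking that the triple produced by one application of Lemma~\ref{lem:shift} to $(\la,\mu,\nu)$ and the triple produced by the same application to $(\la+(1),\mu+(1),\nu+(1))$ differ only by adding one box to the first row of each — and that this added box lands in a position where a further ``vertical'' shift (the second application of Lemma~\ref{lem:shift}, as in Lemma~\ref{lem:walls}) equates the two, because the relevant length/width inequalities $\ell(\la)\cdot\ell(\mu) \le lm$ etc.\ are preserved.

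Concretely I expect the cleanest route is: apply Lemma~\ref{lem:shift} once to reduce both $\g(\la,\mu,\nu)$ and $\g(\la+(1),\mu+(1),\nu+(1))$ to coefficients of the form $\g(A, B, C)$ and $\g(A', B', C')$ where, thanks to $\la_1\ge mc$ and $\mu_1\ge lc$, the primed triple is obtained from the unprimed one by $+(1)$ in each coordinate but now with the first rows being the \emph{longest} rows by a wide margin; then apply a second shift identity (or directly a known far-into-the-stable-range criterion) in which such a uniform $+(1)$ provably does not matter. The hardest part will be the bookkeeping of the $\plp$ operations: one must verify that after column-concatenation the extra box sits in row $1$ of each diagram and that all the length conditions forcing vanishing (``$\ell$ of first $>$ product of $\ell$'s of the other two'') continue to hold simultaneously for both triples, so that the two chains of shift identities really land on the same coefficient. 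I would organize that verification by tracking, for each of the three partitions, the position of the single extra box through each $\plp$ and $+$ step and confirming it is always in the first row, using the hypotheses $\la_1\ge \ell(\mu)\nu_1$, $\mu_1\ge\ell(\la)\nu_1$ exactly at the moment a concatenation $\plp$ is performed on the first coordinate.
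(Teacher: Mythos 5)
Your plan is to reduce to the single-step claim $\g(\la,\mu,\nu)=\g(\la+(1),\mu+(1),\nu+(1))$, and then prove it by applying Lemma~\ref{lem:shift} (possibly twice, as in Lemma~\ref{lem:walls}) to both triples. This has a genuine gap: Lemma~\ref{lem:shift} is an \emph{exact} equality between two Kronecker coefficients, so applying the same instance of it to $(\la,\mu,\nu)$ and to $(\la+(1),\mu+(1),\nu+(1))$ simply produces two new triples that still differ by $+(1)$ in each coordinate, with the original two coefficients unchanged. You acknowledge this and then defer the real work to ``a second shift identity (or directly a known far-into-the-stable-range criterion) in which such a uniform $+(1)$ provably does not matter.'' But there is no such pre-existing criterion available here with the sharp hypotheses $\la_1\geq \ell(\mu)\nu_1$, $\mu_1\geq\ell(\la)\nu_1$; establishing that criterion is precisely the content of Theorem~\ref{thm:stable_range}, so the argument is circular. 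Likewise, your opening claim that one can ``certify membership in the stable range by exhibiting \dots a single triple that simultaneously sees both $(\la,\mu,\nu)$ and $(\la+(1),\mu+(1),\nu+(1))$ under the same Lemma~\ref{lem:shift} transformation'' is not a fact from the excerpt; Lemma~\ref{lem:walls} is just an iterated shift, not a stability criterion.

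What is missing is a mechanism that actually \emph{identifies} (not merely relabels) the two sides. The paper supplies this through Lemma~\ref{lem:extremecase}: when the first-row marginals are at least $bc+h$ and $ac+h$ and the tail of the third marginal has total weight at most $h$, every binary contingency array $Q\in\mathcal C(\al,\be,\ga)$ is \emph{forced} to place $\{1\}\times\{1\}\times[c+1,c+h]$ in $Q$ and nothing else in that slab. This rigidity is what lets one either (i) build an injective, surjective map $\varphi$ on highest weight vectors in $\bigwedge^D(\IC^a\otimes\IC^b\otimes\IC^c)$, (ii) match terms in the signed contingency-array formula~\eqref{eq:kron_ct}, or (iii) constrain the multi-LR tableaux in formula~\eqref{eq:kron_multilr} so that the extra $h$ columns contribute trivially. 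Any correct proof must include an argument of this type. To repair your proposal, you would need to stop manipulating the triples with shift identities and instead invoke a concrete model of $\g$ (HWVs, contingency arrays, or multi-LR tableaux) in which the hypotheses force the ``extra $h$'' to sit in a unique, detachable position.
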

We provide three proofs of this fact, one in \S\ref{sec:gln}, and two in \S\ref{sec:symfct}. Those sections can be read independently of each other.
The proofs make use of an observation on 3-dimensional contingency arrays with zeros and ones as entries (Lemma~\ref{lem:extremecase}), but they use it in different ways.

We identify subsets $Q \subseteq \IN^3$ with their characteristic functions $Q : \IN^3\to\{0,1\}$, and we call $Q$ a binary or $\{0,1\}$-contingency array.
This means, we interpret $Q$ as a function to $\{0,1\}$, and as the point set of its support. The interpretation will always be clear from the context.
The 2-dimensional marginals of $Q$ are defined as $Q_{i\ast\ast} := \sum_{j,k}Q_{i,j,k} = |Q\cap(\{i\}\times\IN\times\IN)|$,
$Q_{\ast i \ast} := \sum_{j,k}Q_{j,i,k} = |Q\cap(\IN\times\{i\}\times\IN)|$,
$Q_{\ast\ast i} := \sum_{j,k}Q_{j,k,i} = |Q\cap(\IN\times\IN\times\{i\})|$.
For $\alpha\in\IN^\IN$,
$\beta\in\IN^\IN$,
$\gamma\in\IN^\IN$, $|\alpha|=|\beta|=|\gamma|<\infty$,
we denote by
\[
\mathcal{C}(\al,\be,\ga) := \{ Q \subseteq \IN^3 \mid 
Q_{i\ast\ast} = \al_i, \ 
Q_{\ast  i\ast } = \be_i, \
Q_{\ast \ast i} = \ga_i \text{ for every $i$}\}.
\]

\begin{lemma}
\label{lem:upperbound}
For partitions $\al$, $\be$, $\ga$ of equal size, we have
$\g'(\al,\be,\ga)\leq|\mathcal C(\al,\be,\ga)|$.
\end{lemma}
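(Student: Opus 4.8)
The plan is to bound the Kronecker-type multiplicity $\g'(\al,\be,\ga)$ by the dimension of a torus weight space that visibly has size $|\mathcal C(\al,\be,\ga)|$. Throughout, $V_\lambda^U$ denotes the Weyl module of $\GL(U)$ with highest weight $\lambda$.

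Set $n := |\al|=|\be|=|\ga|$ and fix $U=\IC^a$, $V=\IC^b$, $W=\IC^c$ with $a\geq\ell(\al)$, $b\geq\ell(\be)$, $c\geq\ell(\ga)$. Consider the polynomial $\GL(U)\times\GL(V)\times\GL(W)$-module $\wedgepower^n(U\otimes V\otimes W)$. First I would observe that a basis of this module is indexed by the $n$-element subsets $Q$ of $[a]\times[b]\times[c]$: take the wedge of the standard basis vectors $e_i\otimes e_j\otimes e_k$, $(i,j,k)\in Q$, in a fixed order. Such a basis vector is a weight vector for the three diagonal tori, of weight $\big((Q_{i\ast\ast})_i,\,(Q_{\ast j\ast})_j,\,(Q_{\ast\ast k})_k\big)$. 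Hence the weight space of weight $(\al,\be,\ga)$ has dimension exactly $|\mathcal C(\al,\be,\ga)|$ (and is zero unless $\al,\be,\ga$ are partitions of a common size~$n$, which we are assuming).

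Next I would compute the multiplicity of the irreducible $V_\al^U\otimes V_\be^V\otimes V_\ga^W$ in $\wedgepower^n(U\otimes V\otimes W)$. Writing $E:=V\otimes W$ and combining the dual Cauchy identity $\wedgepower^n(U\otimes E)=\bigoplus_{\rho\vdash n}V_\rho^U\otimes V_{\rho'}^E$ with the general linear description of the Kronecker coefficient (the restriction of $V_\sigma^{V\otimes W}$ to $\GL(V)\times\GL(W)$ equals $\bigoplus_{\mu,\nu}(V_\mu^V\otimes V_\nu^W)^{\oplus\g(\sigma,\mu,\nu)}$, see \S\ref{sec:gln}), this multiplicity equals $\g(\al',\be,\ga)=\g'(\al,\be,\ga)$. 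Finally, in an irreducible polynomial $\GL_N$-module $V_\lambda$ the highest weight space $(V_\lambda)_\lambda$ is one-dimensional; so in any polynomial $\GL(U)\times\GL(V)\times\GL(W)$-module $M$ each copy of $V_\al^U\otimes V_\be^V\otimes V_\ga^W$ contributes exactly $1$ to $\dim M_{(\al,\be,\ga)}$ while every other irreducible summand contributes a nonnegative amount. Applying this to $M=\wedgepower^n(U\otimes V\otimes W)$ gives $\g'(\al,\be,\ga)\le\dim M_{(\al,\be,\ga)}=|\mathcal C(\al,\be,\ga)|$.

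I expect the only delicate step to be the multiplicity computation: making sure the transposes land in the right places when the dual Cauchy identity is combined with the branching $\GL(V\otimes W)\downarrow\GL(V)\times\GL(W)$, and keeping the bounds $a\geq\ell(\al)$, $b\geq\ell(\be)$, $c\geq\ell(\ga)$ in force so the relevant Weyl modules do not vanish. The same argument can be run entirely with symmetric functions: from $\prod_{i,j,k}(1+x_iy_jz_k)=\sum_{\lambda,\rho,\sigma}\g(\lambda',\rho,\sigma)\,s_\lambda(x)s_\rho(y)s_\sigma(z)$ (dual Cauchy for the alphabet $\{x_i\}$ against $\{y_jz_k\}$, then the Schur expansion in a product alphabet) one extracts the coefficient of the monomial $x^\al y^\be z^\ga$, obtaining $|\mathcal C(\al,\be,\ga)|$ on the left and $\sum_{\lambda,\rho,\sigma}\g(\lambda',\rho,\sigma)K_{\lambda\al}K_{\rho\be}K_{\sigma\ga}$ on the right, which is at least $\g(\al',\be,\ga)=\g'(\al,\be,\ga)$ since all Kostka numbers are nonnegative and $K_{\al\al}=K_{\be\be}=K_{\ga\ga}=1$.
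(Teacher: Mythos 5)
Your proposal is correct and follows the same two routes the paper points to: the weight‐space argument in $\wedgepower^n(U\otimes V\otimes W)$ (the HWV multiplicity $\g'(\al,\be,\ga)$ is at most the dimension of the weight-$(\al,\be,\ga)$ space, which is $|\mathcal C(\al,\be,\ga)|$) is exactly what is set up in \S\ref{subsec:toolsgln}, and the dual (triple) Cauchy identity with Kostka nonnegativity is the argument sketched in \S\ref{ss:sym_fun_background} after equation~\eqref{eq:kron_ct_0}. One tiny slip: the parenthetical claim that the weight space vanishes unless $\al,\be,\ga$ are partitions is false (weight spaces can be nonzero for any composition weights); what is true, and what you actually use, is that the highest-weight space of a given irreducible is one-dimensional, so your argument is unaffected.
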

\begin{proof}
There are different proofs of this fact, for example
\cite[Lem.~2.6]{IMW17} and~\cite[Thm.~5.3]{PP20}, see also \S\ref{subsec:toolsgln} and~\S\ref{ss:sym_fun_background}.
\end{proof}

The following lemma shows how restrictions on the marginals can result in strong restrictions on the sets~$Q$, a technique that was also applied in \cite{IMW17}.

\begin{lemma}
\label{lem:extremecase}

Let $\al,\be,\ga$ be compositions with $|\al|=|\be|=|\ga|$. 
Let $a\geq\ell(\alpha)$, $b\geq\ell(\beta)$,
and let the integers $c,h$ be such that  $c+h \geq \ell(\ga)$ and 
$\sum_{i>c} \ga_i \leq h$.
Furthermore, let $\alpha_1\geq bc+h$, $\beta_1\geq ac+h$.

Then,
for every $Q\in\mathcal{C}(\al,\be,\ga)$ we have
\begin{align*}
& \{1\}\times[b]\times[c] \subseteq Q, \quad
[a]\times\{1\}\times[c] \subseteq Q, \quad
\{1\}\times\{1\}\times[c+h] \subseteq Q, \text{ and}\\
& Q \cap (\IN \times \IN \times [c+1,c+h]) = \{1\} \times \{1\} \times [c+1,c+h].
\end{align*}
In particular, if $\mathcal{C}(\al,\be,\ga)$ is non-empty, then
$a=\ell(\al)$, $b=\ell(\be)$,
$\gamma_i=1$ for all $c+1\leq i\leq c+h$,
and $\al_1=bc+h$, $\be_1=ac+h$,
$\al_2\leq bc$,
and $\be_2\leq ac$.

\end{lemma}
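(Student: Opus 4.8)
The plan is to exploit the constraint that the marginals in the first two coordinates have a dominant first entry, which forces most of the mass of $Q$ to sit on the coordinate planes through the origin. First I would fix $Q\in\mathcal C(\al,\be,\ga)$ and consider the slab $S:=\IN\times\IN\times[c+1,c+h]$. Its total mass is $\sum_{i>c}\ga_i\leq h$. Now look at the $a$ rows $[a]\times\{1\}\times\IN$ and the $b$ columns $\{1\}\times[b]\times\IN$ in the plane $j=1$ resp.\ $i=1$; since $Q_{1\ast\ast}=\al_1\geq bc+h$ and the part of row $1$ lying in $\IN\times\IN\times[1,c]$ can contain at most $bc$ points (it meets only $b$ columns $j\in[b]$ because $\be_j=0$ for $j>b$, wait—more carefully: within the plane $i=1$, the second marginal is bounded columnwise by $\be$, so at most $\ell(\be)\le b$ columns are used, giving at most $bc$ points with third coordinate $\le c$), the remaining $\ge h$ points of row $1$ must lie with third coordinate $>c$, i.e.\ in $S$. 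But $S$ has only $\le h$ points total, so \emph{all} of $S$ lies in the line $\{1\}\times\{1\}\times[c+1,c+h]$ and that line is entirely contained in $Q$; this already gives the fourth displayed inclusion and forces $\al_1=bc+h$, $\be_1=ac+h$ (by the symmetric argument with coordinates $1$ and $2$ swapped), $\ga_i=1$ for $c<i\le c+h$, and shows the plane $i=1$, $j=1$, layers $1,\dots,c$ are fully used inside $Q$, i.e.\ $\{1\}\times\{1\}\times[c]\subseteq Q$.

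Next I would upgrade this to the two claimed two-dimensional inclusions. Having used up exactly $h$ of the $\al_1$ points of row $1$ in the slab $S$, the remaining $bc$ points of row $1$ must fill $\{1\}\times[b]\times[c]$ completely: they live in the $i=1$ plane with third coordinate $\le c$, that region has at most $b\cdot c$ available cells (only columns $j\le b$), and there are exactly $bc$ of them, so equality forces $\{1\}\times[b]\times[c]\subseteq Q$ and simultaneously $\be_j\ge c$ for $j\in[b]$ hence $\ell(\be)=b$, i.e.\ $b=\ell(\be)$; by symmetry $[a]\times\{1\}\times[c]\subseteq Q$ and $a=\ell(\al)$. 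Finally the bounds $\al_2\le bc$ and $\be_2\le ac$: row $2$ (the set $\{2\}\times\IN\times\IN$) is disjoint from the slab $S$ — we showed $S\cap Q$ sits in row $1$ — so all $\al_2$ of its points have third coordinate $\le c$; and since the second marginal restricts the columns used to $j\le b=\ell(\be)$, row $2$ meets at most $bc$ cells, giving $\al_2\le bc$, and symmetrically $\be_2\le ac$.

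The only genuinely delicate point, which I would write out carefully, is the counting of how many cells of a single row (or column) in a fixed coordinate plane can actually be occupied: one has to invoke that $\be_j=0$ for $j>\ell(\be)$ and $\ell(\be)\le b$ to cap the "horizontal extent," and the slab-mass bound $\sum_{i>c}\ga_i\le h$ together with $c+h\ge\ell(\ga)$ to cap the "vertical extent," and then chase the equalities so that every inequality in the chain is tight. Everything else is bookkeeping of marginals; the containments $\{1\}\times\{1\}\times[c+h]\subseteq Q$ and the slab identity follow immediately once the first counting step pins down that row $1$ spends exactly $h$ of its mass above level $c$, and I expect no obstacle beyond being scrupulous about which indices range over what.
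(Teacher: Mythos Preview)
Your plan is correct and is essentially the paper's argument, organized slightly differently: you bound the $i=1$ plane and the $j=1$ plane separately against the slab mass $\sum_{k>c}\gamma_k\le h$ and then intersect, whereas the paper packages both planes into a single inclusion--exclusion chain $\alpha_1+\beta_1=|Q\cap B_\cap|+|Q\cap B_\cup|\le(c+h)+(a+b-1)c+h\le\alpha_1+\beta_1$ with $B_\cup=(\{1\}\times[b]\times[c+h])\cup([a]\times\{1\}\times[c+h])$ and $B_\cap=\{1\}\times\{1\}\times[c+h]$, and then reads off all the equalities at once. One small point of order in your write-up: the conclusion that $Q\cap S$ lies on the \emph{line} $\{1\}\times\{1\}\times[c+1,c+h]$ genuinely needs both the $i=1$ and the $j=1$ passes---the $i=1$ pass alone only confines $Q\cap S$ to the \emph{plane} $i=1$---so the symmetric step has to be invoked before that line conclusion, not as a parenthetical afterward.
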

\begin{proof}
The situation is depicted in Figure~\ref{fig:threed}.
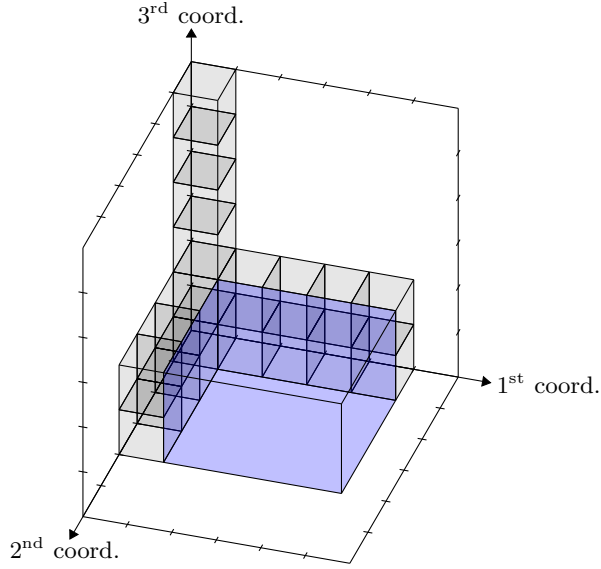
\begin{figure}
\begin{center}
\begin{tikzpicture}[coordinateIIID, scale=1.1]
\grid
\tikzbox{1}{1}{1}
\tikzbox{2}{1}{1}
\tikzbox{3}{1}{1}
\tikzbox{4}{1}{1}
\tikzbox{1}{1}{2}
\tikzbox{2}{1}{2}
\tikzbox{3}{1}{2}
\tikzbox{4}{1}{2}
\tikzbox{1}{2}{1}
\tikzbox{1}{3}{1}
\tikzbox{1}{4}{1}
\tikzbox{1}{5}{1}
\tikzbox{1}{2}{2}
\tikzbox{1}{3}{2}
\tikzbox{1}{4}{2}
\tikzbox{1}{5}{2}
\tikzbox{1}{1}{3}
\tikzbox{1}{1}{4}
\tikzbox{1}{1}{5}
\tikzbox{1}{1}{6}
\tikzcuboid{2}{2}{1}{4}{5}{2}{blue}
\end{tikzpicture}
\end{center}
\caption{Lemma~\ref{lem:extremecase} for $a=5$, $b=4$, $c=2$, $h=4$. A gray cube represents a forced 1 in the contingency array. Absence of color represents a forced 0 in the contingency array. The blue box shows the area where both zeros and ones are possible.}
\label{fig:threed}
\end{figure}
Assume that there exists a binary contingency array $Q\in\mathcal C(\al,\be,\ga)$.
Let
$B_\cup := \{1\}\times[b]\times[c+h] \ \cup \ [a]\times\{1\}\times[c+h]$ be the set of points in the planes  $x=1$ and $y=1$,
and let
$B_\cap := \{1\}\times\{1\}\times[c+h]$ be the set of points on the line $x=y=1$.
Let $H_i := Q \cap (\IN\times\IN\times\{i\}) \cap B_\cup$ be the entries of $Q$ in $B_\cup$ at the section with the plane $z=i$.
In particular, 
$$\sum_{i=1}^{c+h}|H_i|=|Q\cap B_\cup|.$$
We have $\sum_{i=c+1}^{c+h} |H_i|\leq \sum_{i=c+1}^{c+h} \ga_i \leq h$, $|H_i|\leq a+b-1$ for all $0<i\leq c$ and
 $|Q\cap B_\cap| \leq c+h$.
All these inequalities must be met with equality, because
\begin{eqnarray*}
\alpha_1+\beta_1
&=&
|Q\cap B_\cap| + |Q\cap B_\cup|
\\
&=&
\textstyle|Q\cap B_\cap| + \sum_{i=1}^{c+h}|H_i|
\\
&=&
\textstyle|Q\cap B_\cap| + \sum_{i=1}^{c}|H_i| + \sum_{i=c+1}^{c+h}|H_i|
\\
&\leq&
(c+h)+(a+b-1)c + h
\\
&=&
(a+b)c+2h
\\
&\leq&
\alpha_1+\beta_1.
\end{eqnarray*}
We thus have the following equalities:
$|Q\cap B_\cap|=c+h=|B_\cap|$ and $\forall i\in[c]$ we have $|H_i|=a+b-1 = |(\IN\times\IN\times\{i\}) \cap B_\cup|$.
Thus we have $B_\cap\subseteq Q$,
and $\{1\}\times[b]\times[c]\subseteq Q$,
and $[a]\times\{1\}\times[c]\subseteq Q$,
and
$Q \cap (\IN\times\IN\times[c+1,c+h])=\{1\}\times\{1\}\times[c+1,c+h]$.
This gives the desired marginals and the claim follows.
\end{proof}

\begin{proof}[Proof of Theorem~\ref{thm:main}]
Let $\ell(\la) = l$, $\ell(\mu)=m$ and $\nu_1 =c$ and set $d=mc+c$, $e=lc+c$.

Suppose first that $\la_1 \leq mc$ and $\mu_1 \leq lc$. We apply Lemma~\ref{lem:walls}, and obtain
$$\g(\la,\mu,\nu) = \g\big( \underbrace{ (d) \plp (c^l+\la) }_{=:\,\hat{\la}}, \, \underbrace{(e) \plp (c^m + \mu)}_{=:\,\hat{\mu}}, \, \underbrace{c^{l+m+1} \plp \nu}_{=:\,\hat{\nu}} \big).$$

The top rows of $\hat{\la},\hat{\mu},\hat{\nu}$ are $d, e, c$ respectively and thus Theorem~\ref{thm:stable_range} gives that for all $h \in \IN$ we have
\begin{align*}
\g(\hat{\la},\,\hat{\mu},\,\hat{\nu} ) &= \g( \hat{\la}+h,\, \hat{\mu}+h,\, \hat{\nu}+h) \\
 &= 
\g( \, (d+h) \plp (c^l+\la) , \, (e+h) \plp (c^m + \mu), \, (c+h)\plp c^{l+m} \plp \nu \, ) \\
&= \rg( c^l+\la,\, c^m + \mu,\, c^{l+m} \plp \nu ),
\end{align*}
where the last identity follows by letting $h \to \infty$. 
This proves Theorem~\ref{thm:main} in the first case.

Suppose now that $\la_1 > mc$, the case $\mu_1 >lc$ is completely analogous. Set $b:=m+1$.
Then we have $\g(\la,\mu,\nu) = \g(\la',\mu,\nu') =0$ since $\ell(\la') =\la_1>mc = \ell(\mu)\ell(\nu')$. 
On the other hand, the reduced Kronecker coefficient is obtained by adding long first rows, $cm+c+ h$, $cl+c+h$, $c+h$ respectively, so 
\begin{align*}
\rg( c^l+\la, \, c^m + \mu, \, c^{l+m} \plp \nu)
&=
\g\big( (cm +c+ h) \plp (c^l+\la),\, (lc+c+h) \plp (c^m+\mu),\, (c+h)\plp c^{l+m} \plp \nu)\big)
\\
&=
 \g'\big( \underbrace{(cm +c+ h) \plp (c^l+\la)}_{=:\,\al},\, \underbrace{(lc+c+h) \plp (c^m+\mu)}_{=:\,\be},\, \underbrace{( (l+b)^{c} + \nu')\plp(1^h)}_{=:\,\ga}\big)
\end{align*}
for sufficiently large $h$.
Let $\hat\ga = (l+b)^c+\nu'$ be $\gamma$ without the $h$ many trailing 1s.
We observe that
$\al_2 = \la_1+c$, $\ell(\be)=b$, and $\ell(\hat\ga)=c$.
From $\la_1 > mc$
we conclude
$\al_2 >bc$.
Lemma~\ref{lem:extremecase} shows that $\mathcal C(\al,\be,\ga)=\emptyset$.
Hence $\g'(\al,\be,\ga)=0$ by Lemma~\ref{lem:upperbound}.
\end{proof}

\section{Proofs via the general linear group}
\label{sec:gln}

\subsection{Tools from the general linear group viewpoint}
\label{subsec:toolsgln}
We refer to \cite[\S8]{Ful97} for the basic properties of the irreducible representations of the general linear group.
The irreducible representations $V_{a^b}(\IC^b)$ of the general linear group $\GL_b$ are 1-dimensional: For $g\in\GL_b$, $v\in V_{a^b}(\IC^b)$, we have $gv := \det(g)^a v$.
Hence, if we decompose $V_{1^{ab}}(\IC^{ab})$ as a $\GL_a\times\GL_b$ representation via the group homomorphism $\GL_a\times\GL_b\to\GL_{ab}$, $(g_1,g_2)\mapsto g_1\otimes g_2$, then we obtain
$V_{1^{ab}}(\IC^{ab})\simeq V_{b^a}(\IC^b)\otimes V_{a^b}(\IC^b)$.
Tensoring with such a 1-dimensional representation preserves irreducibility: $V_\la(\IC^a)\otimes V_{b^a}(\IC^a) \simeq V_{b^a+\la}(\IC^a)$.

The Kronecker coefficients have an interpretation
as the structure coefficients arising when decomposing irreducible $\GL_{ab}$ representations as $\GL_a\times\GL_b$ representations:
\[
V_\nu(\IC^{ab}) \simeq \bigoplus_{\substack{\la \vdash_a |\nu| \\ \mu \vdash_b |\nu|}} \left(
V_\la(\IC^{a}) \otimes V_\mu(\IC^{b})
\right)^{\oplus\g(\la,\mu,\nu)}.
\]
This can be seen directly from Schur-Weyl duality, see e.g.~\cite[(2.2)]{Chr06} or \cite[Pro.~4.4.11]{Ike12}.

Another formulation is via the multiplicity of the irreducible $G := \GL_a\times\GL_b\times\GL_c$ representation $V_\alpha(\IC^a)\otimes V_\beta(\IC^b)\otimes V_\gamma(\IC^c)$ in the $D$-th wedge power of $\IC^{a}\otimes\IC^{b}\otimes\IC^{c}$, see \cite{IMW17}. Formally for partitions $\al,\be,\ga\vdash D$ we have
\[
\g'(\al,\be,\ga) \ = \ \textup{mult}_{\al,\be,\ga}\left(
\wedgepower^D(\IC^{a}\otimes\IC^{b}\otimes\IC^{c})
\right).
\]
A vector $v$ for which $\big(\diag(r_1,\ldots,r_a),\diag(s_1,\ldots,s_b),\diag(t_1,\ldots,t_c)\big)v = r_1^{\la_1}\cdots r_a^{\la_a}\cdot
s_1^{\mu_1}\cdots s_b^{\mu_b}\cdot
t_1^{\nu_1}\cdots t_c^{\nu_c} v$
is called a \emph{weight vector} of weight $(\la,\mu,\nu)$.

For $(A,B,C)\in \IC^{a\times a} \times \IC^{b\times b} \times \IC^{c\times c}$,
the Lie algebra action on $\wedgepower^D(\IC^a\otimes\IC^b\otimes\IC^c)$
is defined as $(A,B,C).v := \lim_{\varepsilon\to0}\varepsilon^{-1}((\varepsilon (A,B,C) + (\textup{id}_a,\textup{id}_b,\textup{id}_c))v-v)$.
A raising operator is the Lie algebra action of $(E_{i-1,i},0,0)$, where $E_{i,j}$ is the matrix with a 1 at position $(i,j)$ and zeros everywhere else.
 The other raising operators are $(0,E_{i-1,i},0)$ and $(0,0,E_{i-1,i})$.
Let $e_i := (0,\ldots,0,1,0,\ldots,0)^T$
and let $e_{i,j,k} := e_i \otimes e_j \otimes e_k$.
Then, for example, $(E_{i,j},0,0) e_{r,1,1} = e_{i,1,1}$ iff $r=j$ and 0 otherwise.
A highest weight vector (HWV) of weight $(\alpha,\beta,\gamma)$ is a nonzero weight vector of weight $(\alpha,\beta,\gamma)$ that is mapped to zero by all raising operators.
The
irreducible $\GL_a\times\GL_b\times\GL_c$
representation
$V_{\al}\otimes V_{\be}\otimes V_{\ga}$
contains exactly one HWV (up to scale), and that is of weight $(\al,\be,\ga)$. Hence (\cite[Lemma~2.1]{IMW17}),
\[
\g'(\alpha,\beta,\gamma) = \dim\left(\HWV_{\alpha,\beta,\gamma}\wedgepower^D(\IC^{a}\otimes\IC^{b}\otimes\IC^{c})\right),
\]
where $\HWV_{\alpha,\beta,\gamma}$ denotes the space of HWVs of weight $(\alpha,\beta,\gamma)$.
Note that
each standard basis vector
in $\wedgepower^D(\IC^{a}\otimes\IC^{b}\otimes\IC^c)$
is a weight vector, and hence for each weight vector space of weight $w$ we have a basis given by the set of standard basis vectors of weight $w$.
Let $e_{i,j,k} := e_i \otimes e_j \otimes e_k$,
and for a list of points $Q \in (\IN^3)^D$ we define $\psi_Q := e_{Q_1} \wedge e_{Q_2} \wedge \cdots \wedge e_{Q_D}$.
If $Q$ has marginals $(\al,\be,\ga)$, then $\psi_Q$ has weight $(\al,\be,\ga)$.
This immediately implies the result of Lemma~\ref{lem:upperbound}.

We illustrate the concept with some examples.
The HWVs in $\bigwedge^2(\IC^2\otimes\IC^2\otimes\IC^1)$ are
$e_{1,1,1}\wedge e_{2,1,1}$ and 
$e_{1,1,1}\wedge e_{1,2,1}$.
A nontrivial example is the HWV
$$
t :=
e_{1,1,1}\wedge e_{2,1,1} \wedge e_{1,2,2}
+
e_{1,1,1}\wedge e_{1,2,1} \wedge e_{2,1,2}
+
e_{1,1,1}\wedge e_{1,1,2} \wedge e_{2,2,1}
$$
of weight $((2,1),(2,1),(2,1))$ in $\bigwedge^3(\IC^2\otimes\IC^2\otimes\IC^2)$,
which can be seen as follows:
\begin{align*}
(E_{1,2},0,0)t &=
e_{1,1,1}\wedge e_{1,2,1} \wedge e_{1,1,2}
+
e_{1,1,1}\wedge e_{1,1,2} \wedge e_{1,2,1}
=0,
\\
(0,E_{1,2},0)t &=
e_{1,1,1}\wedge e_{2,1,1} \wedge e_{1,1,2}
+
e_{1,1,1}\wedge e_{1,1,2} \wedge e_{2,1,1}
=0
, \\
(0,0,E_{1,2})t &=
e_{1,1,1}\wedge e_{2,1,1} \wedge e_{1,2,1}
+
e_{1,1,1}\wedge e_{1,2,1} \wedge e_{2,1,1}
=0
.
\end{align*}

\subsection{Proofs from the general linear group viewpoint}
For the sake of completeness, we present the short proof of Lemma~\ref{lem:shift} from \cite[Pf of Lem~2.1]{BOR09}
 and \cite[Cor.~4.4.14]{Ike12}.
\begin{proof}[Proof of Lemma~\ref{lem:shift} via the general linear group]
We have that $V_{m^l+\la}(\IC^l)\otimes V_{l^m+\mu}(\IC^m)$ occurs in $V_{1^{lm}+\nu}(\IC^{lm})$ with multiplicity $\g(\,m^l+\la,\ l^m+\mu,\ 1^{lm}+\nu\,)$.
But we also have
\begin{eqnarray*}
V_{1^{lm}+\nu}(\IC^{lm})
&\simeq&
V_{1^{lm}}(\IC^{lm}) \otimes V_{\nu}(\IC^{lm})
\\
&\simeq&
(V_{m^l}(\IC^l)\otimes V_{l^m}(\IC^m)) \ \otimes \
\bigoplus_{\substack{\la \vdash_l |\nu| \\ \mu \vdash_m |\nu|}} \left(
V_\la(\IC^{l}) \otimes V_\mu(\IC^{m})
\right)^{\oplus\g(\la,\mu,\nu)}
\\
&\simeq&
\bigoplus_{\substack{\la \vdash_a |\nu| \\ \mu \vdash_b |\nu|}} \left(
V_{m^l+\la}(\IC^{l}) \otimes V_{l^m+\mu}(\IC^{m})
\right)^{\oplus\g(\la,\mu,\nu)}.
\end{eqnarray*}
\mbox{~}

\vspace{-1cm}
\end{proof}

\begin{proof}[Proof of Theorem~\ref{thm:stable_range} via contingeny arrays and highest weight vectors]
Let $a:=\ell(\la)$, $b:=\ell(\mu)$, $c:=\nu_1$.
Let $\gamma := \nu'$, so $\ell(\gamma)=c$.
We have $\la_1 \geq bc$ and $\mu_1 \geq ac$.
Observe that
$\g(\la,\mu,\nu) = \g'(\la,\mu,\gamma)$.
Let
$\widetilde \la = \la + (h)$,
$\widetilde \mu = \mu + (h)$,
$\widetilde \gamma = \gamma \plp (1^h)$.
We define an injective linear map $\varphi$ as follows.
\begin{eqnarray*}
\varphi : \  \wedgepower^D(\IC^{a}\otimes\IC^{b}\otimes\IC^{c})
&\to&
\wedgepower^{D+h}(\IC^{a}\otimes\IC^{b}\otimes\IC^{c+h})
\\
v &\mapsto& v \wedge e_{1,1,c+1} \wedge e_{1,1,c+2} \wedge \cdots \wedge e_{1,1,c+h}
\end{eqnarray*}
Note that $\varphi$ maps vectors of weight $(\la,\mu,\ga)$ to vectors of weight $(\widetilde\la,\widetilde\mu,\widetilde\ga)$.
It remains to show that $\varphi$ maps HWVs to HWVs, and that every HWV of weight $(\widetilde\la,\widetilde\mu,\widetilde\ga)$ has a preimage under $\varphi$.

We first prove that $\varphi$ sends HWVs to HWVs.
By construction of $\varphi$, we observe that for $1 \leq i < i' \leq a$, we have
$$(E_{i,i'},0,0) \varphi(u) = \varphi((E_{i,i'},0,0)u) = \varphi(0) = 0.$$ Analogously, $(0,E_{j,j'},0) \varphi(u) = 0$ for $1 \leq j < j' \leq b$, and $(0,0,E_{k,k'}) \varphi(u) = 0$ for $1 \leq k < k' \leq c$. 
The remaining raising operators also vanish by construction of $\varphi$, because 
$$(0,0,E_{k,k'}) (v \wedge e_{1,1,c+1} \wedge \cdots \wedge e_{1,1,c+h}) = v \wedge e_{1,1,c+1} \wedge \cdots \wedge \widehat{e_{1,1,c+k}} \wedge e_{1,1,c+k'} \wedge e_{1,1,c+k'} \wedge \cdots \wedge e_{1,1,c+h} = 0$$
 because of the repeated factor $e_{1,1,c+k'}$. Here the $\widehat{e_{1,1,c+k}}$ means omission of that factor.

We now show that every weight vector of weight $(\widetilde\la,\widetilde\mu,\widetilde\ga)$ has a preimage under $\varphi$, which finishes the proof.
It is sufficient to show this for basis vectors.
Let $u = \psi_P$ be a basis weight vector of weight $(\widetilde\la,\widetilde\mu,\widetilde\ga)$,
i.e., $Q\subseteq\IN^3$ with marginals $(\widetilde\la,\widetilde\mu,\widetilde\ga)$.
We apply Lemma~\ref{lem:extremecase} to see that $\{1\}\times\{1\}\times[c+1,c+h]\subset Q$
and $Q\cap(\IN\times\IN\times\{i\})=\{(1,1,i)\}$ for all $c+1\leq i\leq c+h$.
Therefore, $\psi_Q$ has a preimage under $\varphi$, namely $\psi_{P}$, where $P$ arises from $Q$ by deleting all points with 3rd coordinate $>c$.
\end{proof}

\section{Proofs via symmetric functions}
\label{sec:symfct}

\subsection{Tools from symmetric functions}
\label{ss:sym_fun_background}

Here we recall basic definitions and facts from symmetric function theory, see~\cite{EC2, Sag, Mac}. 

The standard Young tableaux (SYT) of shape $\la \vdash n$ are assignments of $1,2,\ldots,n$ to the Young diagram of $\la$, so that the numbers are decreasing along rows and down columns, and each number appears exactly once. A semi-standard Young tableaux (SSYT) of skew shape $\la/\mu$ is an assignment of integers $1,2,\ldots,N$ to the boxes of the skew Young diagram $\la/\mu$, such that the values weakly increase along rows and strictly down columns. We say that an SSYT $T$ has type $type(T)=\al$ if there are $\al_i$ entries equal to $i$ for each $i$. 

\ytableausetup{boxsize=2ex}
As an example $\ytableaushort{\none\none 113,\none233,1344}$ is an SSYT of shape $(5,4,4)/(2,1)$ and type $(3,1,4,2)$.

The ring of symmetric functions $\Lambda$ has several fundamental bases. Here we will use the monomial basis $\{ m_\la\}$ given by $m_\la =x_1^{\la_1}x_2^{\la_2}\ldots +\cdots$, summing over all distinct monomials with exponents $\la_1,\la_2,\ldots$. We will also use the homogeneous symmetric functions $\{h_\la\}$ given by
$$h_m := \sum_{i_1\leq i_2 \leq \cdots \leq i_m} x_{i_1} x_{i_2}\cdots x_{i_m} \text{ for }m>0; \quad h_0=1; \quad h_{m}=0 \text{ for }m<0; \text{ and }\quad h_\la := h_{\la_1} h_{\la_2}\cdots .$$
The \emph{Schur functions} $s_\la$ are one of the fundamental bases of the ring $\Lambda$ of symmetric functions. Moreover, $s_\la(x_1,\ldots,x_N)$ is the value of the character of $V_\la$ at a matrix with eigenvalues $x_1,\ldots,x_N$. 

We have the following formulas for them, where $\ell=\ell(\la)$,
\begin{align}
\text{Jacobi-Trudi identity:} \quad & s_\la = \det[ h_{\la_i -i+ j} ]_{i,j=1}^\ell \\
\text{Weyl determinantal formula:} \quad & s_\la(x_1,\ldots,x_N) = \frac{ \det[ x_i^{\la_j+N-j}]_{i,j=1}^N }{\det[x_i^{N-j}] } \label{eq:weyl}\\
\text{via SSYTs:} \quad & s_\la = \sum_{T \in SSYT(\la) } x^{type(T)}
\end{align}

The Littlewood-Richardson coefficients are the structure constants in the ring of symmetric functions as
$$s_\mu(x) s_\nu(x) = \sum_\la c^\la_{\mu,\nu} s_\la(x).$$
They can be computed combinatorially via the Littlewood-Richardson rule: 
$c^{\la}_{\mu\nu}$ is equal to the number of SSYTs $T$ of shape $\la/\mu$, type $\nu$, and whose reading words is a ballot sequence. The  reading word is obtained by reading the tableaux right to left along rows, top to bottom, and a word is a ballot sequence if in every prefix the number of $i$'s is not less than the number of $i+1$'s for every $i$. 
The multi-LR coefficients $c^{\la}_{\al^1 \cdots \al^k}$ are defined as
\begin{equation}\label{eq:multi-LR}
c^{\la}_{\al^1\cdots \al^k} := \langle s_\la, s_{\al_1} s_{\al^2}\cdots s_{\al^k} \rangle = \sum_{\be^1,\be^2,\ldots} c^{\la}_{\al^1 \be^1} c^{\be^1}_{\al^2 \be^2} \cdots c^{\be^{k-1}}_{\al^{k-1}\al^k}
\end{equation}
from where it is easy to see that they count SSYTs $T$ of shape $\la$ and type $(\al^1 \plp \al^2 \plp \cdots)$, such that the reading word of each skew subtableau corresponding to the entries with values between $1+\sum_{i=1}^r \ell(\al^i)$  and $\sum_{i=1}^{r+1} \ell(\al^i)$ is a lattice permutation for every $r=1,\ldots,k-1$. For example,
\ytableausetup{boxsize=2.5ex}
$$\ytableaushort{1111446,222457,35566} *[*(red!30)]{4,3,1}*[*(blue!30)]{4+2,3+2,1+2}*[*(yellow!30)]{6+1,5+1,3+2} \qquad \text{ and } \qquad \ytableaushort{1111446,222466,35557} *[*(red!30)]{4,3,1}*[*(blue!30)]{4+2,3+1,1+3}*[*(yellow!30)]{6+1,4+2,4+1}$$
are two  multi-LR tableaux of shape $\la =(7,6,5)$ and types $\al^1=(4,3,1)$, $\al^2 =(3,3)$, $\al^3=(3,1)$.

\medskip

The Kronecker coefficient can be studied via the following expansions
\begin{align}\label{eq:schur_kron1}
s_\la[x\cdot y] = \sum_{\mu,\nu} \g(\la,\mu,\nu) s_\mu(x) s_\nu(y),
\end{align}
where $x \cdot y = (x_1y_1,x_1y_2,\ldots,x_2y_1,\ldots)$ consists of the pairwise products of the two sets of variables.
In particular, this gives that 
$$h_m[x \cdot y] = \sum_\la s_\la(x)s_\la(y).$$
From the Jacobi-Trudy identity we thus obtain 
$$s_\la[x\cdot y] = \det[ h_{\la_i-i+j}[x\cdot y] ] = \sum_{\sigma \in S_{\ell}} \sgn(\sigma) \sum_{\, \al^i \vdash \la_i-i+\sigma_i} s_{\al^1}(x)\cdots s_{\al^\ell}(x)s_{\al^1}(y)\cdots s_{\al^\ell}(y),$$
so
\begin{align}\label{eq:kron_multilr}
\g(\la,\mu,\nu) = \sum_{\sigma \in S_{\ell}} \sgn(\sigma) \sum_{\, \al^i \vdash \la_i-i+\sigma_i} c^{\mu}_{\al^1\cdots \al^k}c^{\nu}_{\al^1\cdots \al^k}.
\end{align}
Note that this identity appears many times in the literature, including~\cite{Val09,PP17,PPq}.

The following are referred as the ``triple Cauchy identities'', see e.g. \cite[Exercise 7.78]{EC2}:
$$\sum_{\la,\mu,\nu} \g(\la,\mu,\nu) s_\la(x) s_\mu(y) s_\nu(z) = \prod_{i,j,k} \frac{1}{1-x_iy_jz_k},$$ 
$$\sum_{\la,\mu,\nu} \g(\la,\mu,\nu) s_\la(x) s_\mu(y) s_{\nu'}(z) = \prod_{i,j,k} (1+x_iy_jz_k),$$ 
where the second identity follows from the first via the involution $\omega$ on the symmetric functions in the variables $z$.
Denote by $C(\al,\be,\ga):=|\mathcal{C}(\al,\be,\ga)|$. Then the second identity becomes
\begin{align}\label{eq:kron_ct_0}
\sum_{\la,\mu,\nu} \g(\la,\mu,\nu) s_\la(x) s_\mu(y) s_{\nu'}(z) = \sum_{\al,\be,\ga} C(\al,\be,\ga) x^\al y^\be z^\ga
\end{align}
Note that this identity immediately gives the upper bound in Lemma~\ref{lem:upperbound} by comparing coefficients at $x^\la y^\mu z^{\nu'}$ on both sides.

We now express $\g(\la,\mu,\nu)$ as an alternating sum over contingnecy arrays. Denote by 
$$\Delta(x) = \det [x_i^{a-j}] = \prod_{i<j}(x_i-x_j) = \sum_{\sigma \in S_a} \sgn(\sigma) x_1^{a-\sigma_1}\cdots x_a^{a - \sigma_a},$$
and multiply by $\Delta(x) \Delta(y) \Delta(z)$ both sides of equation~\eqref{eq:kron_ct_0}. Expressing the Schur functions via the ratio of determinants in~\ref{eq:weyl}, we obtain
$$\sum_{\la,\mu,\nu} \g(\la,\mu,\nu) \det[x_i^{\la_j+a-j}]\det[y_i^{\mu_j+b-j}] \det[z_i^{\nu'_j +c-j}] = \Delta(x)\Delta(y)\Delta(z) \sum_{\al,\be,\ga} C(\al,\be,\ga) x^\al y^\be z^\ga$$ 
Comparing coefficients at $x_1^{\la_1+a-1}\ldots y_1^{\mu_1+b-1}\ldots z_1^{\nu'_1 + c-1} \ldots$ on both sides
we obtain 
$$\g(\la,\mu,\nu) = [x_1^{\la_1+a-1}\ldots y_1^{\mu_1+b-1}\ldots z_1^{\nu'_1 + c-1} \ldots]\Delta(x)\Delta(y)\Delta(z) \sum_{\al,\be,\ga} C(\al,\be,\ga) x^\al y^\be z^\ga, $$
where the $[\cdots]$ denotes the coefficient extraction. 
Expanding the $\Delta$s into monomials, whose marginals we incorporate, we get that we must have
$\la_i + a - i = \al_i + a-\sigma_i$ etc, so $\al_i = \la_i+\sigma_i-i$ and we obtain, see also~\cite{PP20},
\begin{align}\label{eq:kron_ct}
\g(\la,\mu,\nu) = \sum_{\sigma \in S_a , \, \pi \in S_b, \,  \rho \in S_c} \sgn(\sigma)\sgn(\pi)\sgn(\rho) C(\la + \sigma-\id, \mu + \pi -\id, \nu' +\rho -\id). 
\end{align}
where a permutation $\sigma$ is interpreted as the vector $(\sigma(1),\ldots,\sigma(a))$ and $\id=(1,2,\ldots)$ is the identity permutation of the corresponding size.

\subsection{Proofs via symmetric functions}\label{ss:sym_fun_proofs}

\begin{proof}[Proof of Lemma~\ref{lem:shift} via symmetric functions]

Let $\hat{\nu}=1^{l m}+ \nu$.
We use Schur functions as follows. We  apply equation~\eqref{eq:schur_kron1} with variables $x_1,\ldots,x_\ell$ and $y_1,\ldots,y_m$, so $x_i=0$ for $i>l$ and $y_j=0$ for $j>m$, and we obtain
\begin{align}\label{eq:schur_1}
s_{\hat{\nu}}[x\cdot y] = \sum_{\theta, \tau} \g(\hat{\nu},\theta,\tau) s_{\theta}(x) s_{\tau} (y).
\end{align}
If $\g(\hat{\nu},\theta,\tau) >0$, we must have $\ell(\theta) \ell(\tau) \geq \ell(\hat{\nu}) = l m$. Since $s_\theta(x_1,\ldots,x_l)=0$ if $\ell(\theta) >l$ and $s_\tau(y_1,\ldots,y_m)=0$ if $\ell(\tau)>m$, we then must have only the partitions with $\ell(\theta)=l$, $\ell(\tau)=m$ appearing.

Since $s_{\hat{\nu}}$ is the generating function over SSYTs with entries $x_1y_1,\ldots,x_ly_m$, and its first column has length exactly $lc$, we must have all the entries $x_iy_j$ appearing exactly once in that column. As this is the minimal possible column, the rest of the SSYT can be any of the SSYTs of the remaining shape and entries $x_1y_1,\ldots,x_ly_m$. 
Thus 
\begin{align}\label{eq:schur_2}
s_{\hat{\nu}}[x\cdot y] = s_\nu[x\cdot y] \prod_{i,j} x_iy_j  = (x_1\ldots x_l)^m(y_1,\ldots,y_m)^l \sum_{\rho,\eta} \g(\nu,\rho,\eta) s_\rho(x) s_\eta(y)
\end{align} 
We also note that $s_{l^m + \mu}(y_1,\ldots,y_m) = (y_1\ldots y_m)^l s_\mu(y)$, since the first $l$ columns of length $m$ are forced to be filled with $1,\ldots,m$, and for the remaining tableaux there are no restrictions other than being an SSYT. Similarly $s_{m^l + \la}(x_1,\ldots,x_l) = (x_1\ldots x_l)^m s_\la(x)$. Comparing coefficients of $s_\la(x)s_\mu(y)$ at equations \eqref{eq:schur_1} and \eqref{eq:schur_2} we thus see that 
$$\g(\hat{\nu}, l^m + \mu, m^l + \la) = \g(\nu,\la,\mu).$$

\mbox{~}\vspace{-1cm}

\end{proof}

\begin{proof}[Proof of Theorem~\ref{thm:stable_range} via contingency arrays and symmetric functions]
From now on we will use  formula~\eqref{eq:kron_ct} and Lemma~\ref{lem:extremecase} to show that the only possible contingency arrays are the ones in Figure~\ref{fig:lem:walls}.

Consider now $\g(\la + h, \mu+h, \nu+h)$ as in the problem, and let $\al = (\la+h), \be = (\mu+h)$, $\ga=(\nu+h)'$  so that 
$\g(\al,\be,\ga') = \g(\la + h, \mu+h, \nu+h)$. Let $\nu_1=c$, $\ell(\la)=a$ and $\ell(\mu)=b$, so we have
 $\al_1 \geq bc+h, \be_1 \geq ac+h$, $\ga_i =1$ for $i=c+1,\ldots,c+h$ and
\begin{align}\label{eq:kron_ct_h}
\g(\al,\be,\ga') = \sum_{\sigma \in S_a,\, \pi \in S_b,\, \rho \in S_{c+h}} \sgn(\sigma)\sgn(\pi)\sgn(\rho) C(\al + \sigma-\id, \be + \pi -\id, \ga +\rho -\id)
\end{align}

In formula~\eqref{eq:kron_ct_h} we then consider $\{0,1\}$-contingency arrays $Q$ with
marginals 
$$Q_{1\ast\ast}:=\sum_{j,k} Q_{1,j,k} = \la_1 +\sigma_1-1 \geq bc+h, \quad Q_{\ast 1 \ast}:= \sum_{i,k} Q_{i,1,k} = \mu_1+ \pi_1-1 \geq ac+h,$$
$$Q_{\ast \ast k}:= \sum_{i,j} Q_{i,j,k} = 1+ \rho_k - k, \text{ for }  k=c+1,\ldots,c+h.$$
 Note that then we have 
$$\sum_{k>c} Q_{\ast\ast k} = h + \sum_{k=c+1}^{c+h} \rho_k - \sum_{k=c+1}^{c+h} k \leq h,$$

and the support of the array is in $[1,a]\times [1,b] \times [1,c+h]$,
so we can apply Lemma~\ref{lem:extremecase} and conclude that 
$$Q_{1,j,k}=0 \text{ iff } (j,k) \in [2,b]\times [c+1,c+h], \quad Q_{i,1,k}=0 \text{ iff } (i,k) \in [2,a] \times [c+1,c+h].$$

Thus, we must have $Q_{1\ast\ast}=bc+h$, $Q_{\ast 1 \ast} =ac+h$  and so $\sigma_1=\pi_1=1$, $\{\rho_{c+1},\ldots,\rho_{c+h}\}=\{c+1,\ldots,c+h\}$ and for $k \in [c+1,c+h]$ we must have $Q_{i,j,k}=0$ unless $i=j=1$. This also forces us to have $Q_{1,1,k}=1$ for all these $k$, and so $\rho_k =k$ for $k=c+1,\ldots,c+h$.  

This completely determines $Q_{i,j,k}$ for $k>c$, as well as $\rho_k$ for $k>c$, and $\rho = \bar{\rho},(c+1),\ldots,(c+h)$ for $\bar{\rho}\in S_c$. We can thus write formula~\eqref{eq:kron_ct_h} as
\begin{align*}
\g(\la+h,\mu+h,\nu+h) = \sum_{\sigma \in S_a, \,\pi \in S_b,\,\rho \in S_{c+h}} \sgn(\sigma)\sgn(\pi)\sgn(\rho) C(\al + \sigma-id, \be + \pi -id, \ga +\rho -id)\\
= \sum_{\sigma \in S_a,\,\pi \in S_b,\,\eta \in S_{c}} \sgn(\sigma)\sgn(\pi)\sgn(\eta) C(\bar{\al} + \sigma-id, \bar{\be} + \pi -id, \bar{\ga} +\eta -id),
\end{align*}
Where $\bar{\al} = \al - (h) = \la$, $\bar{\be} = \be - (h) = \mu$ and $\bar{\ga} = (\ga_1\ldots,\ga_c) = \nu'$. 
As the last part coincides with the expression for $\g(\la,\mu,\nu)$ in~\eqref{eq:kron_ct}, we get the desired identity. 
\end{proof}

\begin{proof}[Proof of Theorem~\ref{thm:stable_range} via Littlewood-Richardson coefficients]
Let again $\ell(\la)=a, \ell(\mu)=b$ and $\nu_1=c$.

We have that $\g(\la+h,\mu+h,\nu+h) = \g( \nu'\plp (1^h), \la' \plp (1^h), \mu+h)$ and we are going to apply formula~\eqref{eq:kron_multilr} with that triple of partitions. Set $\hat{\mu}=\mu+h$, $\hat{\la} = \la' \plp (1^h) = (\la+h)'$ and $\hat{\nu}=\nu'\plp(1^h)(\nu+h)'$.  Here $\ell(\nu'\plp(1^h)) = c+h$, so
$$\g(\la+h, \mu+h,\nu+h) = \sum_{\sigma \in S_{c+h} } \sgn(\sigma) \sum_{\al^i \vdash \hat{\nu}_i-i+\sigma_i} c^{\hat{\la}}_{\al^1 \al^2 \cdots} c^{\hat{\mu}}_{\al^1 \al^2 \cdots}$$

We will now characterize the possible partitions $\al^i$ involved in this sum. From the iterated definition of the multi-LR coefficients~\eqref{eq:multi-LR} we see that in order for the coefficients to be nonzero, we must have $\al^i \subset \hat{\mu}$ and $\al^i \subset \hat{\la}$. In particular then $\ell(\al^i) \leq \ell(\mu) =b$ and $\al^i_1 \leq \hat{\la}_1 = a$. Note that multi-LR coefficients count certain SSYTs of type $(\al^1\plp \al^2 \plp\ldots \plp \al^c \plp \ldots)$ and thus in the shape $\hat{\la}$ the first column will have at most $\ell(\al^1) + \cdots +\ell(\al^c) \leq bc$ many entries from the first $c$ partitions. So there are at least $h$ boxes in the first column which need to be covered by the partitions $\al^{c+1},\ldots, \al^{c+h}$. We then have 
$$h \leq \ell(\al^{c+1})+\cdots+\ell(\al^{c+h}) \leq |\al^{c+1}| + \cdots + |\al^{c+h}| = \sum_{i=c+1}^{c+h} 1 -i +\sigma_i \leq h,$$
as $\sigma_{c+1}+\cdots+\sigma_{c+h} \leq c+1+\cdots c+h$. Thus we need to have equalities, and so

$$|\al^{c+1}| + \cdots + |\al^{c+h}| =h, \ell(\al^i)=|\al^i|,$$
so $\al^i$ are single column partitions, possibly empty.  

Further, we have $\al^i \leq a$,
$\al^i \subset \hat{\mu}$. As there is a multi-LR of type $(\al^1 \plp \al^2 \cdots)$, the first row of that tableaux can only be occupied by the smallest entries of each type. So  we must have 
$$ac+h =\hat{\mu}_1 \leq \sum_i \al^i_1 \leq \sum_{i=1}^c a + \sum_{i=c+1}^{c+h} \al^i_1.$$  
Thus $\al_1^{c+1}+\cdots + \al^{c+h}_1 \geq h$. Since $\al^i_1\leq 1$ by the above consideration, we must have $\al^i=(1)$ for all $i>c$.  So $\sigma_i=i$ for $i=c+1,\ldots,c+h$. 

Then 
$$c^{\hat{\la}}_{\al^1 \al^2 \cdots \al^{c+h}} = c^{\la'}_{\al^1\cdots \al^c} \quad \text{and} \quad  c^{\hat{\mu}}_{\al^1 \al^2 \cdots \al^{c+h} } = c^{\mu}_{\al^1\cdots \al^c}.$$
We thus get that 
\begin{align*}
\g(\la+h, \mu+h,\nu+h) = \sum_{\sigma \in S_{c+h} } \sgn(\sigma) \sum_{\al^i \vdash \hat{\nu}_i-i+\sigma_i} c^{\hat{\la}}_{\al^1 \al^2 \cdots} c^{\hat{\mu}}_{\al^1 \al^2 \cdots}\\
=\sum_{\sigma \in S_{c} } \sgn(\sigma) \sum_{\al^i \vdash \nu'_i-i+\sigma_i} c^{\la'}_{\al^1 \al^2 \cdots} c^{\mu}_{\al^1 \al^2 \cdots} =\g(\nu',\la',\mu)=\g(\la,\mu,\nu),
\end{align*}
which completes the proof.
\end{proof}

We now discuss how Theorem~\ref{thm:main} can be seen from the following identity, which first appeared in~\cite{BOR2}, where it was proven using symmetric function operators. It was then reformulated in
~\cite[Theorem 4.3]{BDO}, which studies the partition algebra, as follows.

Set $m = r + s$ and let $\nu\vdash m-l$, $\la\vdash r$, $\mu\vdash s$ for some non-negative integer $l$.
Then
\[
\rg(\la,\mu,\nu) =
\sum_{\substack{l_1, l_2\\l=l_1+2l_2}}
\sum_{\substack{\alpha\vdash r-l_1-l_2\\\beta\vdash s-l_1-l_2}}
\sum_{\substack{\pi,\rho,\sigma\vdash l_1\\\gamma\vdash l_2}}
c_{\al,\be,\pi}^\nu
c_{\al,\rho,\ga}^\la
c_{\ga,\sigma,\be}^\mu
\g(\pi,\rho,\sigma)
\]

We now apply it to compute the reduced Kronecker in Theorem~\ref{thm:main}.
Let $\hat{\la} = \nu_1^{\ell(\la)}+\la$, $\hat{\mu} = \nu_1^{\ell(\mu)}+\mu$ and $\hat{\nu} = (\nu_1^{\ell(\la)+\ell(\mu)},\nu)$. Then $m -l  = (\ell(\la)+\ell(\mu))\nu_1 + n$, $r= \ell(\la)\nu_1+n$, $s =\ell(\mu)\nu_1+n$, so $l=n$ and  the above identity translates to
$$
\rg(\hat{\la},\hat{\mu},\hat{\nu}) = \sum_{\substack{l_1, l_2\\n=l_1+2l_2}}
\sum_{\substack{\alpha\vdash r-l_1-l_2\\\beta\vdash s-l_1-l_2}}
\sum_{\substack{\pi,\rho,\sigma\vdash l_1\\\gamma\vdash l_2}} c_{\al,\be,\pi}^{\hat{\nu}}
c_{\al,\rho,\ga}^{\hat{\la}}
c_{\ga,\sigma,\be}^{\hat{\mu}}
\g(\pi,\rho,\sigma)
$$

We now observe that $|\al| \geq \ell(\la)\nu_1$, and if $c^{\hat{\la}}_{\al,\rho,\ga}>0$, $c^{\hat{\nu}}_{\al,\be,\pi}>0$ then $\al \subset \hat{\nu} \cap \hat{\la} = (\nu_1^{\ell(\la)})$. Then we must have $\al = \nu_1^{\ell(\la)}$ and so $l_1+l_2=n$. Since $l_1 +2l_2=n$, we must have $l_2=0$ and $l_1=n$, so $\gamma=\emptyset$. Similarly, we obtain $\be=(\nu_1^{\ell(\mu)})$, leaving us with
$$
\rg(\hat{\la},\hat{\mu},\hat{\nu}) = 
\sum_{\pi,\rho,\sigma\vdash n} c_{\al,\be,\pi}^{\hat{\nu}}
c_{\al,\rho}^{\hat{\la}}
c_{\sigma,\be}^{\hat{\mu}}
\g(\pi,\rho,\sigma),
$$
Observe that the Littlewood-Richardson rule gives, since $\al$ is the rectangle in the beginning of $\hat{\la}$, that $c^{\hat{\la}}_{\al,\rho} = 0$ if $\rho \neq \hat{\la} /\al = \la$, and is $1$ otherwise. Similarly the other Littlewood-Richardson coefficients are zero unless $\sigma=\mu$ and $\pi=\nu$, we are left with only one partition triple   $(\pi,\rho,\sigma) = (\nu,\la,\mu)$, whose coefficient is 1 and the identity in Theorem~\ref{thm:main} follows.

{\footnotesize

}
\end{document}